\documentclass[11pt]{article}
\usepackage{mathrsfs}
\usepackage{amsfonts}
\usepackage{}
\usepackage{amsmath,amssymb,amsthm,latexsym,amstext}
\usepackage[mathscr]{eucal}
\usepackage{rotate,graphics,epsfig,epstopdf}
\usepackage{float}
\usepackage{color}
\usepackage{subfigure}
\usepackage{indentfirst}
\usepackage{bm}

\textwidth 16.5cm \textheight 22.5cm \oddsidemargin 0.0cm

\newcommand{\be}{\begin{eqnarray}}
\newcommand{\ee}{\end{eqnarray}}
\newcommand{\by}{\begin{eqnarray*}}
\newcommand{\ey}{\end{eqnarray*}}
\newcommand{\bn}{\begin{enumerate}}
\newcommand{\en}{\end{enumerate}}
\newcommand{\ei}{\end{itemize}}

\newtheorem{theorem}{Theorem}
\newtheorem{lemma}[theorem]{Lemma}

\newtheorem{remark}[theorem]{Remark}
\newtheorem{definition}[theorem]{Definition}
\newtheorem{proposition}[theorem]{Proposition}

\renewcommand{\theequation}{\arabic{section}.\arabic{equation}}

\numberwithin{equation}{section}

\begin{document}
\date{}
\title{\bf Second-order McKean-Vlasov stochastic evolution equation driven by Poisson jumps: existence, uniqueness and averaging principle  \footnote{This work was supported by
the Natural Science Foundation of Jiangsu Province, BK20230899 and
the National Natural Science Foundation of China, 11771207.
}}
\author{ Chungang Shi \footnote{shichungang@njust.edu.cn }\\
\texttt{{\scriptsize School of Mathematics and Statistics, Nanjing University of Science and Technology,
Nanjing, 210094, P. R. China}}}\maketitle
\begin{abstract}
In the paper, a class of second-order McKean-Vlasov stochastic evolution equation driven by Poisson jumps with non-Lipschitz conditions is considered. The existence and uniqueness of the mild solution is established by means of the Carath${\rm \acute{e}}$odory approximation technique.  Furthermore, an averaging principle is obtained between the solution of the second-order McKean-Vlasov stochastic evolution equation and that of the simplified equation in mean square sense.
\end{abstract}

\textbf{Key Words:} McKean-Vlasov equation; Carath${\rm \acute{e}}$odory approximation; Cosine family; Averaging principle; Second-order stochastic evolution equation.


\section{Introduction}
  \setcounter{equation}{0}
  \renewcommand{\theequation}
{1.\arabic{equation}}
Stochastic functional partial differential equation is a natural choice in mathematical modeling of some phenomena in natural sciences~\cite{DZ,So,Ch}. Further, distribution-dependent first-order stochastic (partial) differential equations in finite and infinite dimensions appear naturally in the study of diffusion processes~\cite{SXW, AD, MM1, Ah}. Subsequently, this class of equations are extended to second-order case~\cite{MW1}. The study of abstract deterministic second-order evolution equations by mean of the cosine family is initialed by Travis and Webb~\cite{TW1}. Whereafter, Travis and Webb~\cite{TW2} and Fattorini~\cite{F} have systematically studied deterministic second-order linear and nonlinear equations with the help of cosine operator theory. And then Morales~\cite{Mo} studied the existence of mild and classic solutions for an abstract second-order impulsive Cauchy problem by the same method.  

For the stochastic case, it is well known that Brownian motion is a continuous stochastic process which is not suitable for describing some real systems whose structures are subject to stochastic abrupt changes. Such as stochastic failures, repairs of the components, changes in the interconnections and sudden environmental changes which may originate from abrupt phenomena. For these discontinuous systems, stochastic differential equation driven by L${\rm\acute{e}}$vy noise is a right model. Furthermore, the second-order stochastic differential equations are often used to account for integrated processes that can be made stationary in continuous time. For example, it is useful for engineers to model mechanical vibrations or charge on a capacitor or condenser subjected to noise excitation by a second-order stochastic differential equations. There are numerous works on the second-order stochastic differential equation. Mahmudov and McKibben~\cite{MM2} considered a class of abstract second-order damped McKean-Vlasov stochastic evolution equations and the global existence, uniqueness of mild solutions was derived under the global Lipschitz and linear growth conditions for coefficients. Ren and Sun~\cite{RS2} investigated a class of second-order neutral stochastic evolution equations with infinite delay under Carath${\rm\acute{e}}$odory conditions by successive approximation. Then Ren and Sakthivel~\cite{RS1} studied the existence, uniqueness and stability of the mild solutions for second-order neutral stochastic evolution equations with infinite delay and Poisson jumps under non-Lipschitz conditions. Yue~\cite{Y} considered a class of second-order neutral impulsive stochastic evolution equation with infinite delay and the existence of mild solutions was obtained by means of the Krasnoselskii-Schaefer fixed point theorem under Lipschitz conditions. McKibben and Webster~\cite{MW2} studied a class of second-order McKean-Vlasov stochastic evolutions driven by fractional Brownian motion and Poisson jumps and the global existence and uniqueness of the mild solutions by utilizing the successive approximation method was obtained under various growth conditions.

On the other hand, the averaging method is a powerful tool for reducing system complexity which allows the original complex time-varying system to be replaced by a simplified autonomous averaging system. The original work of the averaging principle is attributed to Khasminskii~\cite{Kh}. There are numerous works in the research area, such as~\cite{SWY, KA, AL}. Specially, Wang et al.~\cite{WYHG} derives an averaging principle for McKean-Vlasov-type Caputo fractional stochastic differential equations. Abouagwa et al.~\cite{AABK} studied a class of mixed neutral Caputo fractional stochastic evolution equations with infinite delay and the existence, uniqueness and averaging principle was established. Guo et al.~\cite{GXWH} considered the averaging principle for a class of stochastic differential equations with the nonlinear terms only satisfying the local Lipschitz and monotone conditions. Chao et al.~\cite{CDGW} investigated a class of McKean-Vlasov stochastic differential equations driven by L${\rm\acute{e}}$vy-type perturbations and the existence and uniqueness of the solutions were established by utilizing the Euler-like approximation. Moreover, the averaging principle was derived for the addressed system in the sense of mean square convergence. Shen et al.~\cite{SXW2} considered a multi-valued McKean-Vlasov stochastic differential equations and obtained a stochastic averaging principle in the same sense.

Motivated by the above researches, in the paper, we consider the following second-order McKean-Vlasov stochastic evolution equation with Poisson jumps
\begin{eqnarray}\label{main}
dX'(t)+[BX'(t)+AX(t)]dt&=&F(t,X(t),\mathcal{L}(X(t)))dt\nonumber\\
&+&G(t,X(t),\mathcal{L}(X(t)))dW(t)\nonumber\\
&+&\int_{Z}J(t-,X(t-),\mathcal{L}(X(t-)),z)\tilde{N}(dt,dz),\nonumber\label{main}\\
X(0)&=&X_{0},\quad X'(0)=X_{1},\nonumber\quad 0\leq t\leq T,\\
\mathcal{L}(X(t))&=&\text{probability distribution of}~X(t).
\end{eqnarray}
Here $H,K$ are real separable Hilbert space, $W(t)$ is the $K$-valued Wiener process with a positive, nuclear covariance operator $Q$ defined on a complete probability space $(\Omega, \mathcal{F}, \mathbb{P})$ equipped with a normal filtration $\{\mathcal{F}_{t}\}_{t\geq0}$ generated by $W$ and $N$. The linear operator $A: D(A)\subset H\to H$ generates a strongly continuous cosine family on $H$. $B: H\to H$ is a bounded linear operator, $F: [0,T]\times H\times \mathcal{P}_{2}(H)\to H$ and $G:[0,T]\times H\times\mathcal{P}_{2}(H)\to L(K,H)$. $X_{0}$ is a $\mathcal{F}_{0}$-measurable $E$-valued~(defined later) random variable and $X_{1}$ is a $H$-valued random variable. Further, they are independent of $W$ and $N$ with finite second moments. 

We study the existence and uniqueness of the mild solutions for equation (\ref{main}) by means of Carath${\rm\acute{e}}$odory approximation technique which is used in~\cite{SXW}. It is worth noting that, in~\cite{SXW}, Shen et al. established the existence and uniqueness of the strong solution for a distribution dependent stochastic differential equation driven by mixed Gaussian noises. Here, we intend to apply the method to second-order McKean-Vlasov stochastic differential equation with discontinuous noise to prove the existence and uniqueness of mild solutions. For the continuous Gaussian noise, Mahmudov and McKibben~\cite{MM2} obtained the global existence and uniqueness of mild solutions under Lipschitz condition by the fixed point theorem. For the first-order abstract stochastic McKean-Vlasov evolution equations, Mahmudov and McKibben~\cite{MM1} established results concerning the global existence, uniqueness of mild solutions under Carath${\rm\acute{e}}$odory conditions which is different from our method. More generally, compared with the recursive approximations method and results in McKibben and Webster~\cite{MW2}, the advantage of the Carath${\rm\acute{e}}$odory approximation method is that we don't need to compute $X_{i}(t),~i=1,2,\cdots,k-1$ to compute $X_{k}(t)$ and our obtained results for mild solution are uniform on time. Lastly, for the established averaging principle between the solution of the second-order McKean-Vlasov stochastic evolution equation and that of the simplified equation in mean square sense, our averaging conditions for coefficients is more general than those in \cite{SXW, GXWH, CDGW}.

The paper is organized as follows. In section \ref{sec:preli}, we present some notations, assumptions and results for later use. In section \ref{EU}, The existence and uniqueness of the mild solutions is established. An averaging principle is derived in section \ref{Aver} between the original equation and the averaged equation.

\section{Preliminary}\label{sec:preli}
  \setcounter{equation}{0}
  \renewcommand{\theequation}
{2.\arabic{equation}}

Let $H,K$ denote the real separable Hilbert space with norm $\|\cdot\|$ and $\|\cdot\|_{K}$ respectively. $L(K,H)$ denotes the space consisting of the bounded linear operator from $K$ to $H$ and if $K=H$, then it is written as $L(H)$. If there is no confusion, let's write the norm in $H, K$ and $L(K, H)$ as $\|\cdot\|$.  Let $(\Omega,\mathcal{F},\mathbb{P})$ denote the complete probability space. $\{\mathcal{F}_{t}\}_{t\geq0}$ is the normal filtration which is generated by $K$-valued Wiener process $W$ having a positive, nuclear covariance operator $Q$, and Poisson process $N$. And let $\mathcal{F}_{T}=\mathcal{F}$ and $K_{0}=Q^{\frac{1}{2}}(K)$. We also introduce the space $L_{2}^{0}=L_{2}(K_{0},H)$ consisting of all Hilbert-Schmidt operator from $K_{0}$ to $H$ equipped with the norm $\|\cdot\|_{L_{2}^{0}}$. Let $\{p(t)\}_{t\geq0}$ be $\sigma$-finite, stationary, $\mathcal{F}_{t}$-adapted Poisson point process and take value in measurable space $(Z,\mathcal{B}(Z))$. Define a Poisson random measure which is induced by $\{p(t)\}$
\begin{eqnarray*}
N((s_{1},s_{2}], U)=\Sigma_{s\in(s_{1},s_{2}]}I_{U}(p(s)).
\end{eqnarray*}
For any $U\in \mathcal{B}(Z\backslash\{0\})$, define the compensated Poisson random measure as
\begin{eqnarray*}
\tilde{N}(dt,dx)=N(dt,dx)-\nu(dx)dt,   
\end{eqnarray*}
where $\nu$ is the intensity measure of $N$. 
Let $\mathcal{P}_{2}(H)$ denote the space of Borel probability measures on $H$ with 
\begin{equation*}
\int_{H}\|x\|^{2}\mu(dx)<\infty
\end{equation*} 
for every $\mu\in\mathcal{P}_{2}$. For any $\mu, \nu$ in $\mathcal{P}_{2}(H)$, define the following Monge-Kantorovich (or Wasserstein) distance:
\begin{equation}\label{dMK}
\mathcal{W}_{2}(\mu,\nu)=\inf_{\pi\in \Pi(\mu,\nu)}\Big[\int\int_{H\times H}\|x-y\|^{2}\Pi(dx,dy)\Big]^{\frac{1}{2}},
\end{equation}
where $\Pi(\mu,\nu)$ is the set of Borel probability measures $\pi$ on $H\times H$ with first and second marginals $\mu$ and $\nu$. Equivalently, for $\mu,\nu\in\mathcal{P}_{2}$,
\begin{equation}\label{rvMK}
\mathcal{W}_{2}(\mu,\nu)=\inf_{(X,\bar{X})}[\mathbb{E}\|X-\bar{X}\|^{2}]^{\frac{1}{2}}
\end{equation}
with random variables $X$ and $\bar{X}$ in $H$ having laws $\mu$ and $\nu$, respectively. For any fixed $T>0$, let $D([0,T]; H)$ denote the space consisting of all $H$-valued ${\rm c\grave{a}dl\grave{a}g}$ processes on $[0,T]$ which is a Banach space equipped with the supremum norm~\cite{EK}. Moreover,  let $L^{2}(\Omega; D([0,T]; H))$ be the totality of $D([0,T]; H)$-valued random variables $Y$ satisfying $\mathbb{E}\sup_{0\leq t\leq T}\|Y(t)\|^{2}<\infty$. Then $L^{2}(\Omega; D([0,T]; H))$ is a Banach space under the norm~\cite{GZ}
\begin{eqnarray*}
\|Y\|_{L^{2}}=(\mathbb{E}\sup_{0\leq t\leq T}\|Y(t)\|^{2})^{\frac{1}{2}}.
\end{eqnarray*}

Next we give some facts on cosine operator family~\cite{TW1,TW2,F}.
\begin{definition}
{\rm(i)} The one-parameter family $\{C(t):t\in \mathbb{R}\}\subseteq L(H)$ satisfying\\
{\rm(a)} C(0)=I,\\
{\rm(b)} C(t)x is continuous with respect to $t$ on $\mathbb{R}$ for any $x\in H$,\\
{\rm(c)} C(t+s)+C(t-s)=2C(t)C(s), for any $s,t\in \mathbb{R}$, is called strongly continuous cosine family.\\
{\rm(ii)} The corresponding strongly continuous sine family $\{S(t):t\in \mathbb{R}\}\subseteq L(H)$ is defined by $S(t)x=\int_{0}^{t}C(s)xds$ for any $t\in\mathbb{R}, x\in H$.
\end{definition}
\begin{definition}
The infinitesimal generator $A: H\to H$ is defined by 
\begin{eqnarray*}
Ax=\frac{d^{2}}{dt^{2}}C(t)x|_{t=0},
\end{eqnarray*}
for any $x\in D(A)=\{x\in H: C(\cdot)x\in C^{2}(\mathbb{R},H)\}$.
\end{definition}
We introduce a space $E=\{x\in H: C(t)x\in C^{1}(\mathbb{R},H)\}$, then obviously $D(A)\subset E$. It is well known that the infinitesimal generator $A$ is a closed, densely defined operator on $H$ and its cosine and sine family satisfies the following properties:
\begin{proposition}\label{prCS}
Assume that $A$ is the infinitesimal generator of a cosine family $\{C(t):t\in\mathbb{R}\}$. Then the following properties hold,\\
{\rm(1)} There exists $M_{A}\geq 1$ and $\omega\geq0$ such that $\|C(t)\|\leq M_{A}e^{\omega|t|}$ and hence $\|S(t)\|\leq M_{A}e^{\omega|t|}$.\\
{\rm(2)} $A\int_{s}^{r}S(u)x=[C(r)-C(s)]x$, for all $0\leq s\leq r<\infty$ and $x\in H$.\\
{\rm(3)} There exists $N\leq 1$ such that $\|S(s)-S(r)\|\leq N|\int_{s}^{r}e^{\omega|u|}du|$, for all $0\leq s\leq r<\infty$.\\
{\rm(4)} If $x\in E$, then $\lim_{t\to0}AS(t)x=0$.\label{proCS4}\\
{\rm(5)} $C(t+s)-C(t-s)=2AS(t)S(s)$, for any $s,t\in\mathbb{R}$.\label{CZS}
\end{proposition}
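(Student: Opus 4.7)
The plan is to derive all five properties from the cosine functional equation $C(t+s)+C(t-s)=2C(t)C(s)$, strong continuity, and the definition $S(t)x=\int_0^t C(u)x\,du$; these are classical results due to Travis-Webb~\cite{TW1,TW2} and Fattorini~\cite{F}, so the strategy is to record the standard proofs rather than invent new ones.

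For (1), I would first invoke the uniform boundedness principle (strong continuity yields $\sup_{|t|\leq 1}\|C(t)x\|<\infty$ for each $x$, hence $\sup_{|t|\leq 1}\|C(t)\|\leq M_A$) to get a local bound, then apply $C(2t)=2C(t)^2-I$, which is the functional equation at $s=t$, together with a dyadic iteration to extend to an exponential bound $\|C(t)\|\leq M_A e^{\omega|t|}$. The corresponding bound for $\|S(t)\|$ is immediate from $\|S(t)\|\leq\int_0^{|t|}\|C(u)\|\,du$ after absorbing a polynomial factor into a slightly larger $\omega$. Property (3) then follows from $S(r)x-S(s)x=\int_s^r C(u)x\,du$ and the bound from (1).

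The crux is (2). Here I would first establish the identity $A\int_0^t S(u)x\,du=C(t)x-x$ for every $x\in H$; the stated formula then follows by subtracting the $r$- and $s$-endpoint versions. To prove it, one computes $h^{-1}(C(h)-I)\int_0^t S(u)x\,du$ using $2C(h)C(u)=C(h+u)+C(u-h)$, rewrites the resulting integral by changing variables, and shows that the limit as $h\to 0^+$ exists and equals $C(t)x-x$. Once (2) is in hand, (4) follows from the observation that the functional equation at $t=0$ gives $C(-s)=C(s)$, so for $x\in E$ the derivative satisfies $C'(0)x=0$; combining this with the identity $C'(t)x=AS(t)x$ (obtained by differentiating $t\mapsto C(t)x$, using (2) and the closedness of $A$) yields $\lim_{t\to 0}AS(t)x=C'(0)x=0$.

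For (5), I would integrate the functional equation once in $s$ over $[0,s]$ to obtain $S(t+s)-S(t-s)=2C(t)S(s)$, integrate once more in $t$ over $[0,t]$, and apply $A$ to both sides using (2) on the left-hand side to arrive at $C(t+s)-C(t-s)=2AS(t)S(s)$. The main obstacle of the whole argument is (2) for arbitrary $x\in H$: showing that $\int_0^t S(u)x\,du$ lies in $D(A)$ without assuming $x\in D(A)$ requires genuine analysis (passing to a limit in the definition of $A$) rather than algebraic manipulation, and once that step is complete the remaining properties fall out of the functional equation by routine bookkeeping.
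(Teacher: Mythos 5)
The paper does not prove this proposition at all: it is quoted as a known collection of facts with a pointer to Travis--Webb~\cite{TW1,TW2} and Fattorini~\cite{F}, so there is no in-paper argument to compare against. Your reconstruction follows exactly the classical Travis--Webb route (uniform boundedness plus the doubling identity $C(2t)=2C(t)^2-I$ for (1), integration of the d'Alembert equation for (3) and (5), the difference-quotient computation for (2), evenness of $C$ for (4)), and as a blueprint it is correct; it also rightly identifies (2) --- membership of $\int_0^t S(u)x\,du$ in $D(A)$ for arbitrary $x\in H$ --- as the only step requiring genuine analysis. One correction is needed in that step: since $A$ is the \emph{second} derivative of $C(\cdot)x$ at $0$ (equivalently, $y\in D(A)$ iff $\lim_{h\to0}2h^{-2}(C(h)-I)y$ exists, in which case it equals $Ay$), the quotient you must compute is $2h^{-2}(C(h)-I)\int_s^r S(u)x\,du$, not $h^{-1}(C(h)-I)\int_0^t S(u)x\,du$; the first-order quotient you wrote tends to $0$ and proves nothing. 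With the correct normalization, the identity $C(h)S(u)x=\tfrac12[S(u+h)+S(u-h)]x$ and the change of variables you describe give $2h^{-2}(C(h)-I)\int_s^r S(u)x\,du\to [C(r)-C(s)]x$, which is the stated (2). The remaining items then go through as you outline (for (5), note that $2S(t)S(s)x$ is exhibited as a difference of integrals of $S(\cdot)x$, so it lies in $D(A)$ by (2) and the application of $A$ is legitimate). Minor cosmetic points you need not worry about: the ``$N\leq1$'' in (3) and the reuse of the same $M_A$ for $S$ in (1) are imprecisions in the statement itself, which your enlarged-$\omega$ remark already handles.
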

\begin{remark}
By the uniformly bounded principle and Proposition \ref{prCS}, we obtain that $\{C(t):t\in[0,T]\}$, $\{S(t):t\in[0,T]\}$ and $\{AS(t):t\in[0,T]\}$ are uniformly bounded,
hence there exists $N_{S}>0$ such that
\begin{eqnarray*}
\|S(t)-S(r)\|\leq N_{S}|t-r|,\quad 0\leq r\leq t\leq T.
\end{eqnarray*}
\end{remark}

\begin{lemma}[\cite{Ma}]\label{ME}
Let $p\geq2$, $r>0$ and $a, b\in\mathbb{R}$, then
\begin{eqnarray*}
|a+b|^{p}\leq [1+r^{\frac{1}{p-1}}]^{p-1}\Big(|a|^{p}+\frac{|b|^{p}}{r}\Big).
\end{eqnarray*}
\end{lemma}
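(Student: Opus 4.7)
The plan is to combine the elementary triangle inequality with Jensen's inequality applied to the convex map $t\mapsto t^{p}$ on $[0,\infty)$, and to choose the convex weights so that the resulting coefficients match the stated constants. The whole proof is purely algebraic; no analytic machinery is needed.

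First, I would reduce to nonnegative quantities by $|a+b|^{p}\leq(|a|+|b|)^{p}$. For any $\lambda\in(0,1)$, I write
\[
|a|+|b|\;=\;\lambda\cdot\frac{|a|}{\lambda}+(1-\lambda)\cdot\frac{|b|}{1-\lambda},
\]
which is a convex combination. Since $p\geq 2$, the map $t\mapsto t^{p}$ is convex on $[0,\infty)$, so Jensen's inequality gives
\[
(|a|+|b|)^{p}\;\leq\;\lambda^{1-p}|a|^{p}+(1-\lambda)^{1-p}|b|^{p}.
\]

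Next, I would tune $\lambda$ so that the two coefficients coincide with the ones in the lemma. Setting $\lambda:=\bigl[1+r^{\frac{1}{p-1}}\bigr]^{-1}$, so that $1-\lambda=r^{\frac{1}{p-1}}\bigl[1+r^{\frac{1}{p-1}}\bigr]^{-1}$ (both lie in $(0,1)$ because $r>0$, and they sum to $1$), a direct computation yields
\[
\lambda^{1-p}=\bigl[1+r^{\frac{1}{p-1}}\bigr]^{p-1},\qquad (1-\lambda)^{1-p}=\bigl[1+r^{\frac{1}{p-1}}\bigr]^{p-1}\cdot r^{-1}.
\]
Plugging these back into the Jensen bound delivers exactly
\[
|a+b|^{p}\;\leq\;\bigl[1+r^{\frac{1}{p-1}}\bigr]^{p-1}\Big(|a|^{p}+\frac{|b|^{p}}{r}\Big),
\]
which is the asserted inequality.

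The only non-obvious ingredient is the choice of the weight $\lambda$; once one recognises the lemma as a parametrised Jensen/Young-type inequality, the right-hand side essentially dictates the correct weight. I therefore do not anticipate any genuine obstacle in carrying out the argument.
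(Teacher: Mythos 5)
Your proof is correct: the weighted convex-combination trick with $\lambda=[1+r^{1/(p-1)}]^{-1}$ and Jensen's inequality for $t\mapsto t^p$ yields exactly the stated constants, and the algebra checks out (note only $p\geq 1$ is actually needed for convexity, so the hypothesis $p\geq 2$ is not fully used). The paper itself gives no proof, merely citing Mao's work, and your argument is the standard elementary derivation of this inequality, so there is nothing to compare beyond the fact that you have supplied a complete proof where the paper supplies a reference.
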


\begin{lemma}[\cite{A}]{\rm(Kunita's first inequality)}\label{KN}
For any $p\geq 2$, there exists a constant $C_{p}>0$ such that
\begin{eqnarray*}
\mathbb{E}\sup_{0\leq s\leq t}\Big\|\int_{0}^{s}\int_{Z}J(\tau,z)\tilde{N}(d\tau,dz)\Big\|^{p}&\leq& C_{p}\Big\{\mathbb{E}\Big[\Big(\int_{0}^{s}\int_{Z}\|J(\tau,z)\|^{2}\nu(dz)d\tau\Big)^{\frac{p}{2}}\Big]\\
&&+\mathbb{E}\int_{0}^{s}\int_{Z}\|J(\tau,z)\|^{p}\nu(dz)d\tau\Big\}.
\end{eqnarray*}
\end{lemma}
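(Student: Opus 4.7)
The plan is to obtain Kunita's first inequality by combining It\^o's formula for Hilbert-space-valued semimartingales with jumps, applied to the map $x\mapsto\|x\|^{p}$, with Doob's maximal inequality and a Young-type absorption argument. Set $M(s)=\int_{0}^{s}\int_{Z}J(\tau,z)\tilde{N}(d\tau,dz)$; this is a purely discontinuous $H$-valued c\`adl\`ag martingale whose jumps at the points of $p$ are $J(\tau,z)$, whose optional quadratic variation is
\begin{equation*}
[M](t)=\int_{0}^{t}\int_{Z}\|J(\tau,z)\|^{2}N(d\tau,dz),
\end{equation*}
and whose predictable compensator is $\langle M\rangle(t)=\int_{0}^{t}\int_{Z}\|J(\tau,z)\|^{2}\nu(dz)d\tau$.

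First I would apply It\^o's formula to $\|M(t)\|^{p}$, producing a local martingale piece plus a predictable drift of the form
\begin{equation*}
\int_{0}^{t}\int_{Z}\bigl[\|M(\tau-)+J\|^{p}-\|M(\tau-)\|^{p}-p\|M(\tau-)\|^{p-2}\langle M(\tau-),J\rangle\bigr]\nu(dz)d\tau.
\end{equation*}
The elementary Taylor-type inequality $\bigl|\|a+b\|^{p}-\|a\|^{p}-p\|a\|^{p-2}\langle a,b\rangle\bigr|\leq C_{p}'\bigl(\|a\|^{p-2}\|b\|^{2}+\|b\|^{p}\bigr)$ then dominates this drift by $C_{p}'\int_{0}^{t}\int_{Z}\|M(\tau-)\|^{p-2}\|J\|^{2}\nu(dz)d\tau+C_{p}'\int_{0}^{t}\int_{Z}\|J\|^{p}\nu(dz)d\tau$. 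Taking $\sup_{0\leq s\leq t}$ on both sides and then expectations, Doob's $L^{p}$ inequality handles the local martingale part, while the mixed drift term is majorized (after pulling $\sup_{0\leq u\leq t}\|M(u)\|^{p-2}$ out of the $d\tau$ integral) by a constant times $\mathbb{E}\bigl(\sup_{0\leq u\leq t}\|M(u)\|^{p-2}\cdot\int_{0}^{t}\int_{Z}\|J\|^{2}\nu(dz)d\tau\bigr)$. Young's inequality with conjugate exponents $p/(p-2)$ and $p/2$ splits this into $\tfrac{1}{2}\mathbb{E}\sup_{0\leq s\leq t}\|M(s)\|^{p}$ plus a constant multiple of $\mathbb{E}\bigl(\int_{0}^{t}\int_{Z}\|J\|^{2}\nu(dz)d\tau\bigr)^{p/2}$, and the $\tfrac{1}{2}$-term is absorbed to the left-hand side, leaving exactly the two terms on the right of the claim.

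The main obstacle I expect is the rigorous justification of It\^o's formula in this infinite-dimensional jump setting for the function $\|\cdot\|^{p}$, which fails to be $C^{2}$ at the origin whenever $p$ is not an even integer. The standard workaround is to mollify via $(\|\cdot\|^{2}+\varepsilon)^{p/2}$, apply It\^o's formula to the smooth approximant, introduce stopping times $\tau_{n}=\inf\{s:\|M(s)\|>n\}$ so that every expectation appearing is finite \emph{before} the Young absorption step is invoked, and then pass to the limits $\varepsilon\downarrow 0$ and $n\to\infty$ by monotone/dominated convergence. Once the It\^o expansion and the remainder bound are in place, the surrounding analysis is the routine Young/H\"older bookkeeping sketched above, and one obtains the stated inequality with $C_{p}$ depending only on $p$.
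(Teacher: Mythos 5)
The paper does not prove this lemma at all: it is quoted verbatim as a known result from the cited reference (Applebaum, \emph{L\'evy Processes and Stochastic Calculus}), so there is no internal proof to compare against. Your sketch is essentially the standard textbook derivation found in that reference --- It\^o's formula for the purely discontinuous martingale $M(s)=\int_{0}^{s}\int_{Z}J\,\tilde{N}(d\tau,dz)$ applied to $\|x\|^{p}$, the second-order Taylor remainder bound $\bigl|\|a+b\|^{p}-\|a\|^{p}-p\|a\|^{p-2}\langle a,b\rangle\bigr|\leq C_{p}'(\|a\|^{p-2}\|b\|^{2}+\|b\|^{p})$, localization, Doob, and Young's inequality with exponents $p/(p-2)$ and $p/2$ to absorb $\tfrac{1}{2}\,\mathbb{E}\sup_{s\leq t}\|M(s)\|^{p}$ into the left-hand side --- and all the essential ingredients are present and assembled correctly. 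The one place where your bookkeeping should be reordered: as written, you take $\sup_{0\leq s\leq t}$ across the It\^o identity and then invoke ``Doob's $L^{p}$ inequality'' on the resulting local-martingale term, but Doob gives no useful $L^{1}$ bound on the supremum of that term. The clean route is to first bound $\mathbb{E}\|M(t)\|^{p}$ at the terminal time (where the stopped martingale part has zero expectation), and only then apply Doob's maximal inequality to the nonnegative submartingale $\|M(s)\|^{p}$ to upgrade to $\mathbb{E}\sup_{0\leq s\leq t}\|M(s)\|^{p}\leq (p/(p-1))^{p}\,\mathbb{E}\|M(t)\|^{p}$; alternatively one can control the supremum of the martingale term by a BDG estimate at exponent one, at the cost of extra work. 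With that reordering your argument is complete and matches the proof in the cited source.
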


\begin{lemma}[\cite{RS1}]{\rm(Bihari's inequality)}
Assume that $T>0, u_{0}\geq0$ and $u(t),v(t)$ are continuous function on $[0,T]$. Further, let $\theta:[0,\infty)\to[0,\infty)$ be continuous, nondecreasing concave function such that $\theta(r)>0$ for any $r>0$, if 
\begin{eqnarray*}
u(t)\leq u_{0}+\int_{0}^{t}v(s)\theta(u(s))ds,\quad 0\leq t\leq T,
\end{eqnarray*}
then 
\begin{eqnarray*}
u(t)\leq G^{-1}\Big(G(u_{0})+\int_{0}^{t}v(s)ds\Big), \quad 0\leq t\leq T,
\end{eqnarray*}
and $G(u_{0})+\int_{0}^{t}v(s)ds\in Dom(G^{-1})$, here $G(r)=\int_{1}^{r}\frac{ds}{\theta(s)},r\geq0$, $G^{-1}$ represents the inverse of $G$. Furthermore, if $u_{0}=0$, and $\int_{0}^{\infty}\frac{1}{\theta(s)}ds=\infty$, then $u(t)\equiv0, 0\leq t\leq T$. 
\end{lemma}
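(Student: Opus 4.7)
The plan is to promote the integral inequality to a differential one by freezing its right-hand side. Set $U(t) := u_{0}+\int_{0}^{t}v(s)\theta(u(s))\,ds$, so that $u(t)\leq U(t)$ by hypothesis, $U(0)=u_{0}$, and $U$ is absolutely continuous with $U'(t)=v(t)\theta(u(t))$ almost everywhere (continuity of $v$ and $\theta\circ u$ making the integrand locally bounded). Because $\theta$ is nondecreasing and $u\leq U$, this differential identity upgrades to the inequality $U'(t)\leq v(t)\theta(U(t))$.

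Next, assuming first that $u_{0}>0$, the value $U(t)\geq u_{0}>0$ keeps us in a region where $\theta$ is strictly positive, so dividing by $\theta(U(t))$ is legal and gives $U'(t)/\theta(U(t))\leq v(t)$. Integrating on $[0,t]$ and performing the substitution $r=U(s)$ on the left yields
\begin{equation*}
\int_{u_{0}}^{U(t)}\frac{dr}{\theta(r)}\;\leq\;\int_{0}^{t}v(s)\,ds,
\qquad\text{i.e.}\qquad G(U(t))-G(u_{0})\leq\int_{0}^{t}v(s)\,ds.
\end{equation*}
Since $1/\theta>0$, the function $G$ is strictly increasing (hence injective) on its domain, so $G^{-1}$ exists and is also increasing; applying it to both sides, on the set where $G(u_{0})+\int_{0}^{t}v(s)\,ds\in\mathrm{Dom}(G^{-1})$, produces the stated bound $u(t)\leq U(t)\leq G^{-1}\!\left(G(u_{0})+\int_{0}^{t}v(s)\,ds\right)$. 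Verifying this last containment (i.e.\ that we stay in the domain where the inversion is defined) is exactly the side condition asserted in the lemma and simply records how far forward in $t$ the estimate can be pushed.

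For the final clause with $u_{0}=0$, the approach is a perturbation argument: replace $u_{0}$ by $\varepsilon>0$, apply the inequality just proved to get $u(t)\leq G^{-1}(G(\varepsilon)+\int_{0}^{t}v(s)\,ds)$, and let $\varepsilon\downarrow 0$. The divergence hypothesis $\int_{0}^{\infty}\!ds/\theta(s)=\infty$, together with $\theta(s)>0$ and concavity (which forces $\theta$ to grow at most linearly, hence forces the integral to blow up at $0$ as well), drives $G(\varepsilon)\to-\infty$, so by continuity of $G^{-1}$ the right-hand side tends to $0$ uniformly on $[0,T]$, yielding $u\equiv 0$. The main point requiring care is the passage to the limit $\varepsilon\downarrow 0$ and the accompanying behaviour of $G$ near $0$; beyond that the argument is a routine separation-of-variables comparison.
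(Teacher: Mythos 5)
The paper itself gives no proof of this lemma (it is quoted from \cite{RS1}), so your argument can only be judged on its own. The first part --- setting $U(t)=u_{0}+\int_{0}^{t}v(s)\theta(u(s))\,ds$, upgrading to $U'\leq v\,\theta(U)$ by monotonicity of $\theta$, separating variables and inverting the increasing function $G$ --- is the standard comparison proof and is essentially correct, with one small caveat: the step $v(t)\theta(u(t))\leq v(t)\theta(U(t))$ silently uses $v\geq 0$, which the statement omits but which must be assumed for the lemma to make sense.

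The genuine gap is in the final clause. You assert that concavity of $\theta$ (at most linear growth) together with $\int_{0}^{\infty}ds/\theta(s)=\infty$ forces the integral to diverge at the origin, hence $G(\varepsilon)\to-\infty$ as $\varepsilon\downarrow 0$. That implication is false: the behaviour of $\theta$ at infinity says nothing about $\int_{0^{+}}ds/\theta(s)$. Take $\theta(s)=\sqrt{s}$, which is continuous, nondecreasing, concave, positive for $s>0$, and satisfies $\int_{0}^{\infty}ds/\sqrt{s}=\infty$ (divergence at infinity); yet $G(\varepsilon)=2\sqrt{\varepsilon}-2\to-2$, and indeed $u(t)=t^{2}/4$ satisfies $u(t)=\int_{0}^{t}\sqrt{u(s)}\,ds$ with $u\not\equiv 0$, so the conclusion itself fails for this $\theta$. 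The hypothesis actually needed for the zero case is divergence \emph{at the origin}, $\int_{0^{+}}ds/\theta(s)=\infty$ --- which is how the paper uses the lemma elsewhere (compare $(\mathbf{H_{3}})$, where $\int_{0^{+}}dx/\Psi(x)=\infty$ is imposed). With that hypothesis, $G(\varepsilon)\to-\infty$ is immediate from the definition of $G$ and no appeal to concavity is needed; your $\varepsilon$-perturbation scheme then goes through. As written, however, the justification of the key limit $G(\varepsilon)\to-\infty$ is incorrect and the clause is not proved under the stated (literally read) hypothesis.
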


\section{Existence and Uniqueness}\label{EU}
  \setcounter{equation}{0}
  \renewcommand{\theequation}
{3.\arabic{equation}}

We give some assumptions with respect to coefficients which are used through the paper as follows,

$(\mathbf{H_{1}})$ $A$ is the infinitesimal generator of a strongly continuous cosine family $\{C(t):t\geq 0\}$ and the cosine family and the corresponding sine family $\{S(t):t\geq 0\}$ satisfy
\begin{eqnarray*}
\sup_{0\leq t\leq T}[\|S(t)\|^{2}+\|C(t)\|^{2}]\leq M_{1},
\end{eqnarray*}
where $M_{1}>0$.

$(\mathbf{H_{2}})$ $B: H\to H$ is a bounded linear operator with upper bound $M_{B}>0$.

$(\mathbf{H_{3}})$ $F:[0,T]\times H\times \mathcal{P}_{2}(H)\to H$, $G: [0,T]\times H\times \mathcal{P}_{2}(H)\to H$ and $J:[0,T]\times H\times\mathcal{P}_{2}(H)\times Z\to H$ are $\mathcal{F}_{t}$-adapted process and satisfy that there exists a nondecreasing bounded function $K(t)$ such that for any $t\in [0,T], x,y\in H$, $\mu,\nu\in \mathcal{P}_{2}(H)$, we have 
\begin{eqnarray*}
&&\|F(t,x,\mu)-F(t,y,\nu)\|^{2}+\|G(t,x,\mu)-G(t,y,\nu)\|_{L_{2}^{0}}^{2}\\
&&+\int_{Z}\|J(t,x,\mu,z)-J(t,y,\nu,z)\|^{2}\nu(dz)ds\leq K(t)\Psi(\|x-y\|^{2}+\mathcal{W}_{2}(\mu,\nu)^{2})
\end{eqnarray*} 
and
\begin{eqnarray*}
\|F(t,0,\delta_{0})\|^{2}+\|G(t,0,\delta_{0})\|_{L_{2}^{0}}^{2}+\int_{Z}\|H(t,0,\delta_{0},z)\|^{2}\nu(dz)\leq K(t),
\end{eqnarray*}
where $\Psi: \mathbb{R}^{+}\to\mathbb{R}^{+}$ is a continuous, nondecreasing, concave function and $\Psi(0)=0, \Psi(x)>0,$ for any $x>0$ such that $\int_{0^{+}}\frac{1}{\Psi(x)}dx=\infty$.

\begin{remark}
We give some concrete examples for function $\Psi$. Let $\gamma>0$ and $\delta\in(0,1)$, define
\begin{eqnarray*}
\Psi_{1}(u)=\gamma u,\quad u\geq0;
\end{eqnarray*}
\begin{equation*}
\Psi_{2}(u)=
\begin{cases}
u\log(u^{-1}),\quad 0\leq u\leq \delta,\\
\delta\log(\delta^{-1})+\Psi'_{2}(\delta-)(u-\delta),\quad u>\delta;
\end{cases}
\end{equation*}
\begin{equation*}
\Psi_{3}(u)=
\begin{cases}
u\log(u^{-1})\log\log(u^{-1}),\quad 0\leq u\leq \delta,\\
\delta\log(\delta^{-1})\log\log(\delta^{-1})+\Psi'_{3}(\delta-)(u-\delta),\quad u>\delta,
\end{cases}
\end{equation*}
where $\Psi'_{i}, i=2,3$ denote the derivative of $\Psi_{i}$ and they are nondecreasing concave function satisfying $\int_{0^{+}}\frac{1}{\Psi_{i}(u)}du=\infty$. Further, we note that the Lipschitz function is a special case.
\end{remark}
\begin{definition}
A ${\rm c\grave{a}dl\grave{a}g}$ stochastic process $X: [0,T]\times\Omega\to H$ is called a mild solution to equation (\ref{main}), if \\
{\rm(1)} $X(t)$ is $\mathcal{F}_{t}$-adapted, for any $0\leq t\leq T$,\\
{\rm(2)} $\int_{0}^{T}\|X(t)\|^{2}dt<\infty$, a.s.,\\
{\rm(3)} $X(t)$ satisfies
\begin{eqnarray*}
X(t)&=&S(t)X_{1}+[C(t)-S(t)B]X_{0}+\int_{0}^{t}C(t-s)BX(s)ds\\
&&+\int_{0}^{t}S(t-s)F(s,X(s),\mathcal{L}(X(s)))ds\\
&&+\int_{0}^{t}S(t-s)G(s,X(s),\mathcal{L}(X(s)))dW(s)\\
&&+\int_{0}^{t}\int_{Z}S(t-s)J(s-,X(s-),\mathcal{L}(X(s-)),z)\tilde{N}(ds,dz),\quad a.s.
\end{eqnarray*}
where $0\leq t\leq T$ and $\mathcal{L}(X(t))$ is the distribution of $X(t)$.
\end{definition}
Next we define the Carath${\rm \acute{e}}$odory approximation sequence as follows. For any integer $k\geq1$, define $X_{k}(t)=X_{0}$, for any $-1\leq t\leq 0$ and  
\begin{eqnarray}\label{Ca}
&&X_{0}(t)=S(t)X_{1}+(C(t)-S(t)B)X_{0},\quad 0\leq t\leq T,\nonumber\\
&&X_{k}(t)=S(t)X_{1}+(C(t)-S(t)B)X_{0}+\int_{0}^{t}C(t-s)BX_{k}(s-\frac{1}{k})ds\nonumber\\
&&+\int_{0}^{t}S(t-s)F(s,X_{k}(s-\frac{1}{k}),\mathcal{L}(X_{k}(s-\frac{1}{k})))ds\nonumber\\
&&+\int_{0}^{t}S(t-s)G(s,X_{k}(s-\frac{1}{k}),\mathcal{L}(X_{k}(s-\frac{1}{k})))dW(s)\\
&&+\int_{0}^{t}\int_{Z}S(t-s)J(s-,X_{k}((s-\frac{1}{k})-),\mathcal{L}(X_{k}(s-\frac{1}{k})-),z)\tilde{N}(ds,dz),\quad k\geq1, t\in[0,T]\nonumber.
\end{eqnarray}
Next we show that the sequence of stochastic processes $\{X_{k}(t)\}_{k\geq1}$ is uniformly bounded.
\begin{lemma}\label{SUB}
Suppose that $(\mathbf{H_{1}})$-$(\mathbf{H_{3}})$ are valid, then for any $k\geq 1$, there exists a constant $C_{1}>0$ such that 
\begin{eqnarray*}
\mathbb{E}\sup_{0\leq t\leq T}\|X_{k}(t)\|^{2}\leq C_{1}.
\end{eqnarray*}
\end{lemma}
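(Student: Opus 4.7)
The approach is to work directly from the Carath\'eodory recursion \eqref{Ca}. I would apply $\|\sum_{i=1}^{6}a_{i}\|^{2}\leq 6\sum_{i=1}^{6}\|a_{i}\|^{2}$ to the six summands of $X_{k}(t)$, take $\sup_{t\in[0,u]}$ and then expectation, and close the argument with Gronwall's inequality. The initial-data contributions $S(t)X_{1}$ and $(C(t)-S(t)B)X_{0}$ are controlled by $(\mathbf{H_{1}})$, $(\mathbf{H_{2}})$ together with $\mathbb{E}\|X_{0}\|^{2}, \mathbb{E}\|X_{1}\|^{2}<\infty$, yielding a constant independent of $k$. For the $B$-integral, a Cauchy--Schwarz in $s$ combined with $\|C(t-s)\|^{2}\leq M_{1}$ and $\|B\|\leq M_{B}$ gives a term of the form $T M_{1}M_{B}^{2}\int_{0}^{u}\mathbb{E}\|X_{k}(s-\tfrac{1}{k})\|^{2}\,ds$.

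For the three nonlinear terms, the key preliminary step is to linearise the growth implied by $(\mathbf{H_{3}})$. Since $\Psi$ is concave on $[0,\infty)$ with $\Psi(0)=0$, it is subadditive, and hence $\Psi(r)\leq \Psi(1)(1+r)$ for every $r\geq 0$. Combined with the boundedness of $K$ on $[0,T]$ and the elementary inequality $\mathcal{W}_{2}(\mathcal{L}(Y),\delta_{0})^{2}\leq \mathbb{E}\|Y\|^{2}$, this produces
\[
\mathbb{E}\|F(s,X_{k}(s-\tfrac{1}{k}),\mathcal{L}(X_{k}(s-\tfrac{1}{k})))\|^{2}\leq C\bigl(1+\mathbb{E}\sup_{0\leq r\leq s}\|X_{k}(r)\|^{2}\bigr),
\]
and entirely analogous bounds for $\|G\|_{L_{2}^{0}}^{2}$ and $\int_{Z}\|J(\cdot,z)\|^{2}\nu(dz)$. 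The drift integral is then handled by Cauchy--Schwarz together with $\|S(t-s)\|^{2}\leq M_{1}$; the Wiener convolution by It\^o's isometry combined with a maximal-inequality estimate exploiting the uniform boundedness of $\|S(t-s)\|$; and the Poisson compensated integral by Kunita's first inequality (Lemma \ref{KN}) with $p=2$, in which the two terms on the right-hand side coincide.

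Setting $u_{k}(t):=\mathbb{E}\sup_{0\leq r\leq t}\|X_{k}(r)\|^{2}$ and noting that $X_{k}(s-\tfrac{1}{k})=X_{0}$ for $s\in[0,1/k]$ while $\mathbb{E}\|X_{k}(s-\tfrac{1}{k})\|^{2}\leq u_{k}(s)$ otherwise, the combined estimates reduce to an integral inequality
\[
u_{k}(t)\leq C_{2}+C_{3}\int_{0}^{t}u_{k}(s)\,ds,\qquad 0\leq t\leq T,
\]
with $C_{2}, C_{3}$ independent of $k$. Gronwall's lemma then gives $u_{k}(T)\leq C_{2}e^{C_{3}T}=:C_{1}$, which is the desired uniform bound.

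The main technical hurdle is the supremum inside the Wiener stochastic convolution $\int_{0}^{t}S(t-s)G\,dW(s)$, which is not a martingale in $t$ because of the $t$-dependence through $S(t-s)$; a direct appeal to Doob's inequality is therefore unavailable and one must invoke a maximal-inequality refinement for stochastic convolutions against a uniformly bounded cosine family. A secondary, milder point is the linearisation $\Psi(r)\leq\Psi(1)(1+r)$: it is lossy but harmless here, although the finer concave structure of $\Psi$ will need to be preserved in the subsequent uniqueness proof via Bihari's inequality rather than crudely dominated by an affine function.
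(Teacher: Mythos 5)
Your proposal follows essentially the same route as the paper: the same six-term decomposition of (\ref{Ca}), the affine domination $\Psi(u)\le\beta(1+u)$ of the concave modulus combined with $\mathcal{W}_{2}(\mathcal{L}(Y),\delta_{0})^{2}\le\mathbb{E}\|Y\|^{2}$, Cauchy--Schwarz/Burkholder--Davis--Gundy/Kunita estimates on the three convolution terms, and a Gronwall closure giving a $k$-independent constant. The one point where you go beyond the paper is in flagging that $\int_{0}^{t}S(t-s)G\,dW(s)$ is not a martingale in $t$, so Doob/BDG does not apply verbatim; the paper silently invokes BDG anyway, and your caution is warranted (for cosine families one can reduce to genuine martingales via $S(t-s)=S(t)C(s)-C(t)S(s)$), but this does not alter the structure of the argument.
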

\begin{proof}
Note that by (\ref{Ca}),
\begin{eqnarray}
&&\mathbb{E}\sup_{0\leq t\leq T}\|X_{k}(t)\|^{2}\leq 6\mathbb{E}\sup_{0\leq t\leq T}\|S(t)X_{1}\|^{2}+6\mathbb{E}\sup_{0\leq t\leq T}\|[C(t)-S(t)B]X_{0}\|^{2}\nonumber\\
&&\quad+6\mathbb{E}\sup_{0\leq t\leq T}\Big\|\int_{0}^{t}C(t-s)BX_{k}(s-\frac{1}{k})ds\Big\|^{2}\nonumber\\
&&\quad+6\mathbb{E}\sup_{0\leq t\leq T}\Big\|\int_{0}^{t}S(t-s)F(s,X_{k}(s-\frac{1}{k}),\mathcal{L}(X_{k}(s-\frac{1}{k})))ds\Big\|^{2}\nonumber
\end{eqnarray}
\begin{eqnarray}\label{UB}
&&\quad+6\mathbb{E}\sup_{0\leq t\leq T}\Big\|\int_{0}^{t}S(t-s)G(s,X_{k}(s-\frac{1}{k}),\mathcal{L}(X_{k}(s-\frac{1}{k})))dW(s)\Big\|^{2}\nonumber\\
&&\quad+6\mathbb{E}\sup_{0\leq t\leq T}\Big\|\int_{0}^{t}S(t-s)J(s-,X_{k}((s-\frac{1}{k})-),\mathcal{L}(X_{k}((s-\frac{1}{k})-)),z)\tilde{N}(ds,dz)\Big\|^{2}\nonumber\\
&&\quad\triangleq 6\sum_{i=1}^{6}I_{i}(t).
\end{eqnarray}
Next let's estimate each of these terms respectively. For $I_{i}(t), i=1, 2, 3$, $(\mathbf{H_{1}})$ and the H${\rm\ddot{o}}$lder inequality yield
\begin{eqnarray}\label{UB1}
I_{1}(t)&\leq& 6\mathbb{E}\sup_{0\leq t\leq T}\|S(t)X_{1}\|^{2}\leq 6M_{1}\mathbb{E}\|X_{1}\|^{2}\label{UB1}.\\
I_{2}(t)&\leq& 6\mathbb{E}\sup_{0\leq t\leq T}\|[C(t)-S(t)B]X_{0}\|^{2}\leq 12M_{1}(1+M_{B}^{2})\mathbb{E}\|X_{0}\|^{2}.\label{UB2}\\
I_{3}(t)&\leq&6\mathbb{E}\sup_{0\leq t\leq T}\Big\|\int_{0}^{t}C(t-s)BX_{k}(s-\frac{1}{k})ds\Big\|^{2}\nonumber\\
&\leq& 6TM_{B}^{2}M_{1}\int_{0}^{T}\mathbb{E}\|X_{k}(s-\frac{1}{k})\|^{2}ds.\label{UB3}
\end{eqnarray}
For $I_{4}(t)$, $(\mathbf{H_{1}})$-$(\mathbf{H_{3}})$ and the ${\rm H\ddot{o}lder}$ inequality yield
\begin{eqnarray}
I_{4}(t)&\leq&12\mathbb{E}\sup_{0\leq t\leq T}\Big(\int_{0}^{t}\|S(t-s)\|\|F(s,X_{k}(s-\frac{1}{k}),\mathcal{L}(X_{k}(s-\frac{1}{k})))-F(s,0,\delta_{0})\|ds\Big)^{2}\nonumber\\
&&+12\mathbb{E}\sup_{0\leq t\leq T}\Big(\int_{0}^{t}\|S(t-s)\|\|F(s,0,\delta_{0})\|ds\Big)^{2}\nonumber\\
&\leq&C_{T}M_{1}\mathbb{E}\int_{0}^{T}K(s)[\Psi(\|X_{k}(s-\frac{1}{k})\|^{2}+\mathcal{W}_{2}(\mathcal{L}(X_{k}(s-\frac{1}{k})),\delta_{0})^{2})+1]ds\nonumber\\
&\leq&C_{T}M_{1}\int_{0}^{T}K(s)(\Psi(2\mathbb{E}\|X_{k}(s-\frac{1}{k})\|^{2})+1)ds.\label{UB4}
\end{eqnarray}
For $I_{5}(t)$, the ${\rm BurkH\ddot{o}lder}$-Davis-Gundy inequality yields
\begin{eqnarray}
I_{5}(t)&\leq&6\mathbb{E}\sup_{0\leq t\leq T}\Big\|\int_{0}^{t}S(t-s)G(s,X_{k}(s-\frac{1}{k}),\mathcal{L}(X_{k}(s-\frac{1}{k})))dW(s)\Big\|^{2}\nonumber\\
&\leq&C_{T}\int_{0}^{T}\mathbb{E}\|S(t-s)G(s,X_{k}(s-\frac{1}{k}),\mathcal{L}(X_{k}(s-\frac{1}{k})))\|_{L_{2}^{0}}^{2}ds\nonumber\\
&\leq&C_{T}M_{1}\int_{0}^{T}\mathbb{E}\|G(s,X_{k}(s-\frac{1}{k}),\mathcal{L}(X_{k}(s-\frac{1}{k})))-G(s,0,\delta_{0})\|_{L_{2}^{0}}^{2}ds\nonumber\\
&&+C_{T}M_{1}\int_{0}^{T}\mathbb{E}\|G(s,0,\delta_{0})\|_{L_{2}^{0}}^{2}ds\nonumber\\
&\leq&C_{T}M_{1}\mathbb{E}\int_{0}^{T}K(s)[\Psi(\|X_{k}(s-\frac{1}{k})\|^{2}+\mathcal{W}_{2}(\mathcal{L}(X_{k}(s-\frac{1}{k})),\delta_{0})^{2})+1]ds\nonumber\\
&\leq&C_{T}M_{1}\int_{0}^{T}K(s)(\Psi(2\mathbb{E}\|X_{k}(s-\frac{1}{k})\|^{2})+1)ds.\label{UB5}
\end{eqnarray}
For $I_{6}(t)$, note that Lemma \ref{KN},
\begin{eqnarray}
I_{6}(t)&\leq&6\mathbb{E}\sup_{0\leq t\leq T}\Big\|\int_{0}^{t}\int_{Z}S(t-s)J(s-,X_{k}((s-\frac{1}{k})-),\mathcal{L}(X_{k}((s-\frac{1}{k})-)),z)\tilde{N}(ds,dz)\Big\|^{2}\nonumber\\
&\leq&C_{T}\int_{0}^{T}\int_{Z}\mathbb{E}\|S(t-s)\|^{2}\|J(s-,X_{k}((s-\frac{1}{k})-),\mathcal{L}(X_{k}((s-\frac{1}{k})-)),z)\|^{2}\nu(dz)ds\nonumber\\
&\leq& C_{T}M_{1}\mathbb{E}\int_{0}^{T}\int_{Z}\|J(s-,X_{k}((s-\frac{1}{k})-),\mathcal{L}(X_{k}((s-\frac{1}{k})-)),z)-J(s,0,\delta_{0})\|^{2}\nu(dz)ds\nonumber\\
&&+C_{T}M_{1}\mathbb{E}\int_{0}^{T}\|J(s,0,\delta_{0})\|^{2}\nu(dz)ds\nonumber\\
&\leq&C_{T}M_{1}\mathbb{E}\int_{0}^{T}K(s)[\Psi(\|X_{k}(s-\frac{1}{k})\|^{2}+\mathcal{W}_{2}(\mathcal{L}(X_{k}(s-\frac{1}{k})),\delta_{0})^{2})+1]ds\nonumber\\
&\leq&C_{T}M_{1}\int_{0}^{T}K(s)(\Psi(2\mathbb{E}\|X_{k}(s-\frac{1}{k})\|^{2})+1)ds.\label{UB6}
\end{eqnarray}
Note that $\Psi(\cdot)$ is nondecreasing, concave function, then there exist a positive number $\beta$ such that
\begin{equation}\label{ES}
\Psi(u)\leq \beta(1+u).
\end{equation}
Then, combining (\ref{UB1})-(\ref{UB6}) with (\ref{UB}),
\begin{eqnarray*}
&&\mathbb{E}\sup_{0\leq t\leq T}\|X_{k}(t)\|^{2}\leq 6M_{1}\mathbb{E}\|X_{1}\|^{2}+12M_{1}(1+M_{B}^{2})\mathbb{E}\|X_{0}\|^{2}\\
&&\quad+6M_{B}^{2}M_{1}\int_{0}^{T}\mathbb{E}\|X_{k}(s-\frac{1}{k})\|^{2}ds+C_{T}M_{1}\int_{0}^{T}K(s)(\Psi(2\mathbb{E}\|X_{k}(s-\frac{1}{k})\|^{2})+1)ds\\
&&\leq 6M_{1}\mathbb{E}\|X_{1}\|^{2}+12M_{1}(1+M_{B}^{2})\mathbb{E}\|X_{0}\|^{2}+6M_{B}^{2}M_{1}\int_{0}^{T}\mathbb{E}\|X_{k}(s-\frac{1}{k})\|^{2}ds\\
&&\quad+C_{T}M_{1}\int_{0}^{T}K(s)[\beta(2\mathbb{E}\|X_{k}(s-\frac{1}{k})\|^{2}+1)+1]ds\\
&&\leq 6M_{1}\mathbb{E}\|X_{1}\|^{2}+12M_{1}(1+M_{B}^{2})\mathbb{E}\|X_{0}\|^{2}+6M_{B}^{2}M_{1}\int_{0}^{T}\mathbb{E}\sup_{0\leq s\leq t}\|X_{k}(s)\|^{2}ds\\
&&\quad+C_{T}M_{1}(\beta+1)\int_{0}^{T}K(s)ds+C_{T}M_{1}\beta K(T)\int_{0}^{T}\mathbb{E}\sup_{0\leq s\leq t}\|X_{k}(s)\|^{2}ds\\
&&\leq 6M_{1}\mathbb{E}\|X_{1}\|^{2}+12M_{1}(1+M_{B}^{2})\mathbb{E}\|X_{0}\|^{2}+C_{T}M_{1}(\beta+1)\int_{0}^{T}K(s)ds\\
&&\quad+(C_{T}M_{B}^{2}M_{1}+\beta C_{T}M_{1}K(T))\int_{0}^{T}\mathbb{E}\sup_{0\leq s\leq t}\|X_{k}(s)\|^{2}dt,
\end{eqnarray*}
Gronwall's inequality yields
\begin{eqnarray*}
\mathbb{E}\sup_{0\leq t\leq T}\|X_{k}(t)\|^{2}\leq C_{1},
\end{eqnarray*}
where
\begin{eqnarray*} 
C_{1}&=&[6M_{1}\mathbb{E}\|X_{1}\|^{2}+12M_{1}(1+M_{B}^{2})\mathbb{E}\|X_{0}\|^{2}\\
&&+C_{T}M_{1}(\beta+1)\int_{0}^{T}K(s)ds]e^{6TM_{B}^{2}M_{1}+\beta C_{T}M_{1}K(T)}.
\end{eqnarray*}
\end{proof}
\begin{lemma}
Suppose that $(\mathbf{H_{1}})$-$(\mathbf{H_{3}})$ are valid, then for any $k\geq 1$, $s, t\in[0,T]$, there exists a constant $C_{3}>0$ such that 
\begin{eqnarray*}
\mathbb{E}\|X_{k}(t)-X_{k}(s)\|^{2}\leq C_{3}(|t-s|^{2}+\Big\|S\Big(\frac{t-s}{2}\Big)\Big\|^{2}+|t-s|).
\end{eqnarray*}
\end{lemma}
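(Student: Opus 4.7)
Without loss of generality assume $0 \le s < t \le T$. My plan is to use the recursion (\ref{Ca}) to write $X_k(t) - X_k(s)$ as the sum of six differences corresponding to the six terms in the formula, then estimate each one separately, with all mean-square terms absorbing the uniform bound $\mathbb{E}\sup_{0 \le r \le T}\|X_k(r)\|^{2} \le C_{1}$ from Lemma \ref{SUB} together with the growth consequence $\Psi(u) \le \beta(1+u)$ from (\ref{ES}).

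For the two terms involving only the initial data, I would treat $(S(t)-S(s))X_{1}$ using the Lipschitz-in-$t$ bound $\|S(t)-S(s)\| \le N_{S}|t-s|$ from the remark after Proposition \ref{prCS}, producing a $|t-s|^{2}\mathbb{E}\|X_{1}\|^{2}$ contribution, and similarly for $-(S(t)-S(s))BX_{0}$. The cosine-difference $(C(t)-C(s))X_{0}$ is the delicate piece: I would invoke Proposition \ref{prCS}(5) with parameters $(t+s)/2$ and $(t-s)/2$ to rewrite
\begin{equation*}
C(t)-C(s) \;=\; 2\,AS\!\left(\tfrac{t+s}{2}\right) S\!\left(\tfrac{t-s}{2}\right),
\end{equation*}
and then use the remark after Proposition \ref{prCS}, which asserts $\{AS(u):u\in[0,T]\}$ is uniformly bounded, to conclude $\|C(t)-C(s)\|^{2} \le C\|S((t-s)/2)\|^{2}$; this produces the $\|S((t-s)/2)\|^{2}$ contribution appearing in the target bound.

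For the four integral terms, I would split each as $\int_{0}^{t}-\int_{0}^{s} = \int_{0}^{s}[\,\cdot\,(t-u)-\cdot\,(s-u)] + \int_{s}^{t}\cdot\,(t-u)$. In the drift involving $B$, the first piece again invokes Proposition \ref{prCS}(5) to get the $\|S((t-s)/2)\|^{2}$ scale (with $\mathbb{E}\|X_{k}(u-1/k)\|^{2}$ integrated on $[0,s]$ controlled by $C_{1}$), while the second piece is bounded by Hölder at the cost of $|t-s|^{2}$. For the $F$-drift I would use $\|S(t-u)-S(s-u)\| \le N_{S}|t-s|$ on $[0,s]$ and Hölder on $[s,t]$, yielding $|t-s|^{2}$. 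For the $G$-stochastic integral I would apply Itô isometry (or the BDG inequality already used in Lemma \ref{SUB}), and for the $J$-compensated jump integral I would apply Lemma \ref{KN}; in both cases $(\mathbf{H_{3}})$ together with (\ref{ES}) and $C_{1}$ bound the integrand, the $[0,s]$ piece gives $|t-s|^{2}$, and the $[s,t]$ piece gives the linear $|t-s|$ contribution (because the isometry/Kunita bound is $\int_{s}^{t}$ of a bounded integrand, not squared). Summing the six estimates yields the claimed bound, with $C_{3}$ depending on $C_{1}$, $M_{1}$, $M_{B}$, $N_{S}$, $K(T)$, $\beta$, $T$, and the second moments of $X_{0}, X_{1}$. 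The main obstacle is the cosine-difference: without Proposition \ref{prCS}(5) one only gets continuity of $C(\cdot)$ in the strong (not operator) topology, and it is precisely the $S((t-s)/2)$ factor extracted there that forces the non-polynomial modulus of continuity recorded in the conclusion.
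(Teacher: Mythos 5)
Your proposal is correct and follows essentially the same route as the paper: the same splitting of each integral into $\int_{0}^{s}[\cdot(t-u)-\cdot(s-u)]+\int_{s}^{t}\cdot(t-u)$ (yielding the paper's ten terms $J_{1},\dots,J_{10}$), the identity $C(t)-C(s)=2AS(\tfrac{t+s}{2})S(\tfrac{t-s}{2})$ from Proposition \ref{prCS}(5) together with the uniform boundedness of $AS(\cdot)$ for the cosine differences, the Lipschitz bound $\|S(t)-S(s)\|\leq N_{S}|t-s|$ for the sine differences, H\"older/BDG/Kunita for the respective integrals, and the uniform bound $C_{1}$ with $\Psi(u)\leq\beta(1+u)$ to close the estimates. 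You also correctly locate the origin of each of the three moduli $|t-s|^{2}$, $\|S(\tfrac{t-s}{2})\|^{2}$, and $|t-s|$, matching the paper's accounting.
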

\begin{proof}
By (\ref{Ca}), 
\begin{eqnarray}\label{EUX}
&&X_{k}(t)-X_{k}(s)=S(t)X_{1}-S(s)X_{1}+[(C(t)-S(t)B)-(C(s)-S(s)B)]X_{0}\nonumber\\
&&\quad\quad\quad+\int_{s}^{t}C(t-r)BX_{k}(r-\frac{1}{k})dr+\int_{0}^{s}[C(t-r)-C(s-r)]BX_{k}(r-\frac{1}{k})dr\nonumber\\
&&\quad\quad\quad+\int_{s}^{t}S(t-r)F(r,X_{k}(r-\frac{1}{k}),\mathcal{L}(X_{k}(r-\frac{1}{k})))dr\nonumber\\
&&\quad\quad\quad+\int_{0}^{s}[S(t-r)-S(s-r)]F(r,X_{k}(r-\frac{1}{k}),\mathcal{L}(X_{k}(r-\frac{1}{k})))dr\nonumber\\
&&\quad\quad\quad+\int_{s}^{t}S(t-r)G(s,X_{k}(r-\frac{1}{k}),\mathcal{L}(X_{k}(r-\frac{1}{k})))dW(r)\nonumber\\
&&\quad\quad\quad+\int_{0}^{s}[S(t-r)-S(s-r)]G(r,X_{k}(r-\frac{1}{k}),\mathcal{L}(X_{k}(r-\frac{1}{k})))dW(r)\nonumber\\
&&\quad\quad\quad+\int_{s}^{t}\int_{Z}S(t-r)J(r-,X_{k}((r-\frac{1}{k})-),\mathcal{L}(X_{k}((r-\frac{1}{k})-)),z)\tilde{N}(dr,dz)\nonumber\\
&&\quad\quad\quad+\int_{0}^{s}\int_{Z}[S(t-r)-S(s-r)]J(r-,X_{k}((r-\frac{1}{k})-),\mathcal{L}(X_{k}((r-\frac{1}{k})-)),z)\tilde{N}(dr,dz)\nonumber\\
&&\quad\quad\quad\triangleq\sum_{i=1}^{10}J_{i}.\label{EUZ}
\end{eqnarray}
For $J_{1}$-$J_{4}$, by Proposition \ref{prCS}, ($\mathbf{H}_{1}$)-($\mathbf{H}_{2}$) and the H${\rm\ddot{o}}$lder inequality, we have
\begin{eqnarray}
\mathbb{E}\|J_{1}\|^{2}&=&\mathbb{E}\|S(t)X_{1}-S(s)X_{1}\|^{2}\leq \mathbb{E}\|S(t)-S(s)\|^{2}\|X_{1}\|^{2}\nonumber\\
&\leq&N_{S}^{2}|t-s|^{2}\mathbb{E}\|X_{1}\|^{2}.\label{EUX1}\\
\mathbb{E}\|J_{2}\|^{2}&=&\mathbb{E}\|[(C(t)-S(t)B)-(C(s)-S(s)B)]X_{0}\|^{2}\nonumber\\
&\leq&2\mathbb{E}\|(C(t)-C(s))X_{0}\|^{2}+2\mathbb{E}\|[S(t)-S(s)]BX_{0}\|^{2}\nonumber\\
&\leq&2\mathbb{E}\Big\|S\Big(\frac{t-s}{2}\Big)\Big\|^{2}\Big\|AS\Big(\frac{t+s}{2}\Big)X_{0}\Big\|^{2}+2M_{B}^{2}N_{S}^{2}|t-s|^{2}\mathbb{E}\|X_{0}\|^{2}\nonumber
\end{eqnarray}
\begin{eqnarray}
&\leq&C_{T}\Big\|S\Big(\frac{t-s}{2}\Big)\Big\|^{2}\mathbb{E}\|X_{0}\|^{2}+2M_{B}^{2}N_{S}^{2}|t-s|^{2}\mathbb{E}\|X_{0}\|^{2}.\\
\mathbb{E}\|J_{3}\|^{2}&\leq&\mathbb{E}\Big(\int_{s}^{t}\|C(t-r)\|\|B\|\|X_{k}(r-\frac{1}{k})\|dr\Big)^{2}\nonumber\\
&\leq& M_{1}M_{B}^{2}|t-s|\int_{s}^{t}\mathbb{E}\|X_{k}(r-\frac{1}{k})\|^{2}dr.\\
\mathbb{E}\|J_{4}\|^{2}&\leq&\mathbb{E}\Big(\int_{0}^{s}\|C(t-r)-C(s-r)\|\|BX_{k}(r-\frac{1}{k})\|dr\Big)^{2}\nonumber\\
&\leq&\Big\|S\Big(\frac{t-s}{2}\Big)\Big\|^{2}\Big(\int_{0}^{s}\Big\|AS\Big(\frac{t+s}{2}-r\Big)\Big\|^{2}\|BX_{k}(r-\frac{1}{k})\|dr\Big)^{2}\nonumber\\
&\leq&C_{T}M_{B}^{2}\Big\|S\Big(\frac{t-s}{2}\Big)\Big\|^{2}\int_{0}^{s}\mathbb{E}\|X_{k}(r-\frac{1}{k})\|^{2}dr.
\end{eqnarray}
For $J_{5}$ and $J_{6}$, ($\mathbf{H}_{1}$), ($\mathbf{H}_{3}$) and the H${\rm\ddot{o}}$lder inequality yield
\begin{eqnarray}
\mathbb{E}\|J_{5}\|^{2}&\leq&2|t-s|\mathbb{E}\int_{s}^{t}\|S(t-r)\|^{2}\|F(r,X_{k}(r-\frac{1}{k}),\mathcal{L}(X_{k}(r-\frac{1}{k})))-F(r,0,\delta_{0})\|^{2}dr\nonumber\\
&&+2|t-s|\mathbb{E}\int_{s}^{t}\|S(t-r)\|^{2}\|F(r,0,\delta_{0})\|^{2}dr\nonumber\\
&\leq&2M_{1}|t-s|\mathbb{E}\int_{s}^{t}K(r)[\Psi(\|X_{k}(r-\frac{1}{k})\|^{2}+\mathcal{W}_{2}(\mathcal{L}(X_{k}(r-\frac{1}{k})),\delta_{0})^{2})+1]dr\nonumber\\
&\leq&2M_{1}|t-s|\int_{s}^{t}K(r)(\Psi(2\mathbb{E}\|X_{k}(r-\frac{1}{k})\|^{2})+1)dr.\\
\mathbb{E}\|J_{6}\|^{2}&\leq&2\mathbb{E}\|\int_{0}^{s}[S(t-r)-S(s-r)]F(r,X_{k}(r-\frac{1}{k}),\mathcal{L}(X_{k}(r-\frac{1}{k})))dr\|^{2}\nonumber\\
&&+2T\mathbb{E}\int_{0}^{s}\|S(t-r)-S(s-r)\|^{2}\|F(r,0,\delta_{0})\|^{2}dr\nonumber\\
&\leq&2TN_{S}^{2}|t-s|^{2}\mathbb{E}\int_{0}^{s}K(r)[\Psi(\|X_{k}(r-\frac{1}{k})\|^{2}+\mathcal{W}_{2}(\mathcal{L}(X_{k}(r-\frac{1}{k})),\delta_{0})^{2})+1]dr\nonumber\\
&\leq&2TN_{S}^{2}|t-s|^{2}\int_{0}^{s}K(r)(\Psi(2\mathbb{E}\|X_{k}(r-\frac{1}{k})\|^{2})+1)dr.
\end{eqnarray}
For $J_{7}$-$J_{10}$, by the ${\rm Burkh\ddot{o}lder}$-Davis-Gundy inequality and Proposition \ref{prCS},
\begin{eqnarray}\label{EJ7}
\mathbb{E}\|J_{7}\|^{2}&\leq&\mathbb{E}\Big\|\int_{s}^{t}S(t-r)G(s,X_{k}(r-\frac{1}{k}),\mathcal{L}(X_{k}(r-\frac{1}{k})))dW(r)\Big\|^{2}\nonumber\\
&\leq&8\mathbb{E}\int_{s}^{t}\|S(t-r)\|^{2}\|G(s,X_{k}(r-\frac{1}{k}),\mathcal{L}(X_{k}(r-\frac{1}{k})))-G(r,0,\delta_{0})\|_{L_{2}^{0}}^{2}dr\nonumber\\
&&+8\mathbb{E}\int_{s}^{t}\|S(t-r)\|^{2}\|G(r,0,\delta_{0})\|_{L_{2}^{0}}^{2}dr\nonumber\\
&\leq&8M_{1}\mathbb{E}\int_{s}^{t}K(r)[\Psi(\|X_{k}(r-\frac{1}{k})\|^{2}+\mathcal{W}_{2}(\mathcal{L}(X_{k}(r-\frac{1}{k})),\delta_{0})^{2})+1]dr\nonumber
\end{eqnarray}
\begin{eqnarray}
&\leq&8M_{1}\int_{s}^{t}K(r)(\Psi(2\mathbb{E}\|X_{k}(r-\frac{1}{k})\|^{2})+1)dr.\\
\mathbb{E}\|J_{8}\|^{2}&\leq&\mathbb{E}\Big\|\int_{0}^{s}[S(t-r)-S(s-r)]G(r,X_{k}(r-\frac{1}{k}),\mathcal{L}(X_{k}(r-\frac{1}{k})))dW(r)\Big\|^{2}\nonumber\\
&\leq&8\mathbb{E}\int_{0}^{s}\|S(t-r)-S(s-r)\|^{2}\|G(r,X_{k}(r-\frac{1}{k}),\mathcal{L}(X_{k}(r-\frac{1}{k})))-G(r,0,\delta_{0})\|_{L_{2}^{0}}^{2}dr\nonumber\\
&&+8\mathbb{E}\int_{0}^{s}\|S(t-r)-S(s-r)\|^{2}\|G(r,0,\delta_{0})\|_{L_{2}^{0}}^{2}dr\nonumber\\
&\leq&8N_{S}^{2}|t-s|^{2}\mathbb{E}\int_{0}^{s}K(r)[\Psi(\|X_{k}(r-\frac{1}{k})\|^{2}+\mathcal{W}_{2}(\mathcal{L}(X_{k}(r-\frac{1}{k})),\delta_{0})^{2})+1]dr\nonumber\\
&\leq&8N_{S}^{2}|t-s|^{2}\int_{0}^{s}K(r)(\Psi(2\mathbb{E}\|X_{k}(r-\frac{1}{k})\|^{2})+1)dr.
\end{eqnarray}
\begin{eqnarray}
\mathbb{E}\|J_{9}\|^{2}&\leq&\mathbb{E}\Big\|\int_{s}^{t}\int_{Z}S(t-r)J(r-,X_{k}((r-\frac{1}{k})-),\mathcal{L}(X_{k}((r-\frac{1}{k})-)),z)\tilde{N}(dr,dz)\Big\|^{2}\nonumber\\
&\leq&C_{T}\mathbb{E}\int_{s}^{t}\int_{Z}\|S(t-r)\|^{2}\|J(r-,X_{k}((r-\frac{1}{k})-),\mathcal{L}(X_{k}((r-\frac{1}{k})-)),z)\nonumber\\
&&~~-J(r-,0,\delta_{0},z)\|^{2}\nu(dz)dr\nonumber\\
&&+C_{T}\mathbb{E}\int_{s}^{t}\int_{Z}\|S(t-r)\|^{2}\|J(r-,0,\delta_{0},z)\|^{2}\nu(dz)dr\nonumber\\
&\leq&C_{T}M_{1}\mathbb{E}\int_{s}^{t}K(r)[\Psi(\|X_{k}(r-\frac{1}{k})\|^{2}+\mathcal{W}_{2}(\mathcal{L}(X_{k}(r-\frac{1}{k})),\delta_{0})^{2})+1]dr\nonumber\\
&\leq&C_{T}M_{1}\int_{s}^{t}K(r)(\Psi(2\mathbb{E}\|X_{k}(r-\frac{1}{k})\|^{2})+1)dr.\\
\mathbb{E}\|J_{10}\|^{2}&\leq&\mathbb{E}\Big\|\int_{0}^{s}\int_{Z}[S(t-r)-S(s-r)]J(r-,X_{k}((r-\frac{1}{k})-),\mathcal{L}(X_{k}((r-\frac{1}{k})-)),z)\tilde{N}(dr,dz)\Big\|^{2}\nonumber\\
&\leq&C_{T}M_{1}|t-s|^{2}\mathbb{E}\int_{0}^{s}\int_{Z}\|J(r-,X_{k}((r-\frac{1}{k})-),\mathcal{L}(X_{k}((r-\frac{1}{k})-)),z)\nonumber\\
&&~~-J(r-,0,\delta_{0},z)\|^{2}\nu(dz)dr\nonumber\\
&&+C_{T}M_{1}|t-s|^{2}\mathbb{E}\int_{0}^{s}\int_{Z}\|J(r-,0,\delta_{0},z)\|^{2}\nu(dz)dr\nonumber\\
&\leq&C_{T}M_{1}|t-s|^{2}\mathbb{E}\int_{0}^{s}K(r)[\Psi(\|X_{k}(r-\frac{1}{k})\|^{2}+\mathcal{W}_{2}(\mathcal{L}(X_{k}(r-\frac{1}{k})),\delta_{0})^{2})+1]dr\nonumber\\
&\leq&C_{T}M_{1}|t-s|^{2}\int_{s}^{t}K(r)(\Psi(2\mathbb{E}\|X_{k}(r-\frac{1}{k})\|^{2})+1)dr.\label{EUX10}
\end{eqnarray}
Then combining (\ref{EUX}) with (\ref{EUX1})-(\ref{EUX10}) and by Lemma \ref{SUB}, we have
\begin{eqnarray*}
&&\mathbb{E}\|X_{k}(t)-X_{k}(s)\|^{2}\leq[10N_{S}^{2}\mathbb{E}\|X_{1}\|^{2}+20M_{B}^{2}N_{S}^{2}\mathbb{E}\|X_{0}\|^{2}]|t-s|^{2}\\
&&\quad+C_{T}\Big\|S\Big(\frac{t-s}{2}\Big)\Big\|^{2}\mathbb{E}\|X_{0}\|^{2}+10M_{1}M_{B}^{2}|t-s|\int_{s}^{t}\mathbb{E}\|X_{k}(r-\frac{1}{k})\|^{2}dr
\end{eqnarray*}
\begin{eqnarray*}
&&\quad+C_{T}M_{B}^{2}\Big\|S\Big(\frac{t-s}{2}\Big)\Big\|^{2}\int_{0}^{s}\mathbb{E}\|X_{k}(r-\frac{1}{k})\|^{2}dr\\
&&\quad+20M_{1}|t-s|\int_{s}^{t}K(r)(\Psi(2\mathbb{E}\|X_{k}(r-\frac{1}{k})\|^{2})+1)dr\\
&&\quad+20TN_{S}^{2}|t-s|^{2}\int_{0}^{s}K(r)(\Psi(2\mathbb{E}\|X_{k}(r-\frac{1}{k})\|^{2})+1)dr\\
&&\quad+80M_{1}\int_{s}^{t}K(r)(\Psi(2\mathbb{E}\|X_{k}(r-\frac{1}{k})\|^{2})+1)dr\\
&&\quad+80N_{S}^{2}|t-s|^{2}\int_{0}^{s}K(r)(\Psi(2\mathbb{E}\|X_{k}(r-\frac{1}{k})\|^{2})+1)dr\\
&&\quad+C_{T}M_{1}\int_{s}^{t}K(r)(\Psi(2\mathbb{E}\|X_{k}(r-\frac{1}{k})\|^{2})+1)dr\\
&&\quad+C_{T}M_{1}|t-s|^{2}\int_{s}^{t}K(r)(\Psi(2\mathbb{E}\|X_{k}(r-\frac{1}{k})\|^{2})+1)dr\\
&&\leq[10N_{S}^{2}\mathbb{E}\|X_{1}\|^{2}+20M_{B}^{2}N_{S}^{2}\mathbb{E}\|X_{0}\|^{2}]|t-s|^{2}+C_{T}\Big\|S\Big(\frac{t-s}{2}\Big)\Big\|^{2}\mathbb{E}\|X_{0}\|^{2}\\
&&\quad+10M_{1}M_{B}^{2}\mathbb{E}\sup_{0\leq t\leq T}\|X_{k}(t)\|^{2}|t-s|^{2}\quad+C_{T}M_{B}^{2}\Big\|S\Big(\frac{t-s}{2}\Big)\Big\|^{2}\mathbb{E}\sup_{0\leq t\leq T}\|X_{k}(t)\|^{2}\\
&&\quad+20M_{1}K(T)[2\beta(\mathbb{E}\sup_{0\leq t\leq T}\|X_{k}(t)\|^{2}+1)+1]|t-s|^{2}\\
&&\quad+20T^{2}N_{S}^{2}K(T)|t-s|^{2}[2\beta(\mathbb{E}\sup_{0\leq t\leq T}\|X_{k}(t)\|^{2}+1)+1]\\
&&\quad+80M_{1}K(T)|t-s|[2\beta(\mathbb{E}\sup_{0\leq t\leq T}\|X_{k}(t)\|^{2}+1)+1]\\
&&\quad+80N_{S}^{2}|t-s|^{2}K(T)T[2\beta(\mathbb{E}\sup_{0\leq t\leq T}\|X_{k}(t)\|^{2}+1)+1]\\
&&\quad+C_{T}M_{1}K(T)|t-s|[2\beta(\mathbb{E}\sup_{0\leq t\leq T}\|X_{k}(t)\|^{2}+1)+1]\\
&&\quad+C_{T}M_{1}K(T)T|t-s|^{2}[2\beta(\mathbb{E}\sup_{0\leq t\leq T}\|X_{k}(t)\|^{2}+1)+1]\\
&&\triangleq C_{2}(|t-s|^{2}+\Big\|S\Big(\frac{t-s}{2}\Big)\Big\|^{2}+|t-s|).
\end{eqnarray*}
Here
\begin{eqnarray*}
C_{2}&=&10N_{S}^{2}\mathbb{E}\|X_{1}\|^{2}+20M_{B}^{2}N_{S}^{2}\mathbb{E}\|X_{0}\|^{2}+10M_{1}M_{B}^{2}C_{1}\\
&&+[20M_{1}K(T)+20T^{2}N_{S}^{2}K(T)+80N_{S}^{2}TK(T)+C_{T}M_{1}K(T)][2\beta(C_{1}+1)+1]\\
&&+C_{T}\mathbb{E}\|X_{0}\|^{2}+C_{T}M_{B}^{2}C_{1}+[80M_{1}K(T)+C_{T}M_{1}K(T)][2\beta(C_{1}+1)+1].
\end{eqnarray*}
\end{proof}
Next we prove the main theorem of this paper.
\begin{theorem}\label{mainthm}
Suppose that $(\mathbf{H}_{1})$-$(\mathbf{H}_{3})$ hold and $X_{0}\in L^{2}(\Omega; E)$, $X_{1}\in L^{2}(\Omega; H)$, then the equation (\ref{main}) has a unique mild solution $X(t)\in L^{2}(\Omega; D([0,T]; H))$.
\end{theorem}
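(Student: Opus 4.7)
The plan is to prove existence by showing that the Carathéodory approximations $\{X_k\}$ from (3.2) form a Cauchy sequence in $L^{2}(\Omega; D([0,T];H))$, pass to the limit to obtain a mild solution, and then establish uniqueness by a separate Bihari argument. Lemma 3.1 and Lemma 3.2 (uniform boundedness and a quantitative modulus of continuity of each $X_{k}$) are exactly the ingredients needed to transfer estimates between the shifted arguments $X_{k}(s-\tfrac{1}{k})$ and the unshifted $X_{k}(s)$ that appear in the drift, diffusion and jump integrands.

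For the Cauchy property, for integers $k\le m$, define
\[
\varphi_{k,m}(t):=\mathbb{E}\sup_{0\le u\le t}\|X_{m}(u)-X_{k}(u)\|^{2}.
\]
The linear and deterministic terms in (3.2) cancel up to the forcing by $BX_{k}(\cdot-\tfrac{1}{k})$, so subtracting the two Carath\'eodory equations and splitting into six integral pieces (mirroring the decomposition used in the proof of Lemma 3.1), I would apply H\"older, the Burkh\"older--Davis--Gundy inequality and Kunita's inequality of Lemma 2.2 to each piece. In each integrand the difference is essentially
\[
\|X_{m}(s-\tfrac{1}{m})-X_{k}(s-\tfrac{1}{k})\|^{2}
\le 2\|X_{m}(s-\tfrac{1}{m})-X_{m}(s-\tfrac{1}{k})\|^{2}+2\|X_{m}(s-\tfrac{1}{k})-X_{k}(s-\tfrac{1}{k})\|^{2}.
\]
Lemma 3.2 bounds the expectation of the first term by $C_{3}\bigl(|\tfrac{1}{k}-\tfrac{1}{m}|^{2}+\|S(\tfrac{1}{2k}-\tfrac{1}{2m})\|^{2}+|\tfrac{1}{k}-\tfrac{1}{m}|\bigr)$, which tends to $0$ as $k\to\infty$ uniformly in $m\ge k$ thanks to strong continuity of $\{S(t)\}$ at $0$. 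The second term is controlled by $\varphi_{k,m}(s)$. Using the non-Lipschitz assumption $(\mathbf{H}_{3})$ together with Jensen's inequality applied to the concave function $\Psi$ (so that $\mathbb{E}\Psi(\cdot)\le\Psi(\mathbb{E}\cdot)$), this yields an integral inequality of the form
\[
\varphi_{k,m}(t)\le \alpha_{k}+C\int_{0}^{t}K(s)\,\Psi\!\bigl(\varphi_{k,m}(s)\bigr)\,ds,
\]
where $\alpha_{k}\to 0$ as $k\to\infty$ (uniformly in $m\ge k$). Bihari's inequality then forces $\varphi_{k,m}(t)\to 0$ uniformly on $[0,T]$, establishing the Cauchy property.

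Denote by $X$ the limit in $L^{2}(\Omega;D([0,T];H))$. Adaptedness is inherited from the approximations and the uniform bound $\mathbb{E}\sup_{0\le t\le T}\|X(t)\|^{2}\le C_{1}$ follows from Lemma 3.1 by Fatou. Passing to the limit in (3.2) requires convergence of each term. The key is to show $F(s,X_{k}(s-\tfrac{1}{k}),\mathcal{L}(X_{k}(s-\tfrac{1}{k})))\to F(s,X(s),\mathcal{L}(X(s)))$ in $L^{2}$, and analogously for $G$ and $J$. Using $(\mathbf{H}_{3})$, continuity of $\Psi$, and the contractive characterization \eqref{rvMK} of the Wasserstein distance (so that $\mathcal{W}_{2}(\mathcal{L}(X_{k}(s-\tfrac{1}{k})),\mathcal{L}(X(s)))^{2}\le \mathbb{E}\|X_{k}(s-\tfrac{1}{k})-X(s)\|^{2}$), together with Lemma 3.2 to handle the shift by $\tfrac{1}{k}$, this limit passage reduces to the already-established Cauchy convergence. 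Dominated convergence and the isometry/Kunita bounds then finish the verification that $X$ satisfies Definition 3.1.

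For uniqueness, if $X,\bar X$ are two mild solutions, set $\phi(t):=\mathbb{E}\sup_{0\le s\le t}\|X(s)-\bar X(s)\|^{2}$. The same six-term decomposition, together with $(\mathbf{H}_{3})$ and Jensen, yields
\[
\phi(t)\le C\int_{0}^{t}K(s)\,\Psi(\phi(s))\,ds.
\]
Since $\phi(0)=0$ and $\int_{0^{+}}\tfrac{du}{\Psi(u)}=\infty$, the last clause of Bihari's inequality forces $\phi\equiv 0$ on $[0,T]$, so $X=\bar X$ a.s.\ in $D([0,T];H)$. The main obstacle is the Cauchy step: keeping the shift errors $|\tfrac{1}{k}-\tfrac{1}{m}|$ under control uniformly in $m\ge k$ while simultaneously absorbing the non-Lipschitz nonlinearity into a form to which Bihari applies; the continuity estimate of Lemma 3.2, combined with concavity of $\Psi$, is what makes this work.
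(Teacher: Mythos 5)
Your proposal is correct and follows essentially the same route as the paper: show the Carath\'eodory iterates form a Cauchy sequence by splitting each difference through an intermediate shifted process, control the shift error with Lemma 3.2, absorb the nonlinearity via concavity of $\Psi$ and Bihari's inequality, pass to the limit in (\ref{Ca}), and prove uniqueness by a second Bihari argument. The only cosmetic differences are that the initial-data terms cancel so only four (not six) integral pieces survive, and that the bounded-operator term contributes a linear (not $\Psi$-type) integrand, which the paper handles by replacing $\Psi$ with $\bar{\Psi}(u)=\Psi(u)+u$ before invoking Bihari --- a harmless adjustment your sketch implicitly requires as well.
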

\begin{proof}
We divide the proof into two steps.

\textbf{Step 1.}~(Existence) we prove that $(X_{k})_{k\geq1}$ is a Cauchy sequence in $L^{2}(\Omega;D([0,T];H))$. In fact,
for any $m>n\geq1$, we have
\begin{eqnarray}\label{ER}
&&\sup_{0\leq t\leq T}\|X_{m}(s)-X_{n}(s)\|^{2}\nonumber\\
&&\leq 4\sup_{0\leq s\leq t}\Big\|\int_{0}^{s}C(s-u)B[X_{m}(u-\frac{1}{m})-X_{n}(u-\frac{1}{n})]du\Big\|^{2}\nonumber\\
&&+4\sup_{0\leq s\leq t}\Big\|\int_{0}^{s}S(s-u)[F(u,X_{m}(u-\frac{1}{m}),\mathcal{L}(X_{m}(u-\frac{1}{m}))\nonumber\\
&&~~-F(u,X_{n}(u-\frac{1}{n}),\mathcal{L}(X_{n}(u-\frac{1}{n}))]du\Big\|^{2}\nonumber\\
&&+4\sup_{0\leq s\leq t}\Big\|\int_{0}^{s}S(s-u)[G(u,X_{m}(u-\frac{1}{m}),\mathcal{L}(X_{m}(u-\frac{1}{m}))\nonumber\\
&&~~-G(u,X_{n}(u-\frac{1}{n}),\mathcal{L}(X_{n}(u-\frac{1}{n}))]dW(u)\Big\|^{2}\nonumber\\
&&+4\sup_{0\leq s\leq t}\Big\|\int_{0}^{s}S(s-u)[J(u-,X_{m}((u-\frac{1}{m})-),\mathcal{L}(X_{m}((u-\frac{1}{m})-)),z)\nonumber\\
&&~~-J(u-,X_{n}((u-\frac{1}{n})-),\mathcal{L}(X_{n}((u-\frac{1}{n})-),z)]\tilde{N}(du,dz)\Big\|^{2}\nonumber\\
&&\triangleq\sum_{i=1}^{4}R_{i}.
\end{eqnarray}
Next we estimate these terms respectively. In fact, by ($\mathbf{H}_{1}$)-($\mathbf{H}_{2}$) and the H${\rm\ddot{o}}$lder inequality,
\begin{eqnarray}
\mathbb{E}R_{1}&=&4\mathbb{E}\sup_{0\leq s\leq t}\Big\|\int_{0}^{s}C(s-u)B[X_{m}(u-\frac{1}{m})-X_{n}(u-\frac{1}{n})]du\Big\|^{2}\nonumber\\
&\leq&8\mathbb{E}\sup_{0\leq s\leq t}\Big\|\int_{0}^{s}C(s-u)B[X_{m}(u-\frac{1}{m})-X_{n}(u-\frac{1}{m})]du\Big\|^{2}\nonumber\\
&&+8\mathbb{E}\sup_{0\leq s\leq t}\Big\|\int_{0}^{s}C(s-u)B[X_{n}(u-\frac{1}{m})-X_{n}(u-\frac{1}{n})]du\Big\|^{2}\nonumber\\
&\leq&8M_{1}M_{B}^{2}\int_{0}^{t}\mathbb{E}\|X_{m}(u-\frac{1}{m})-X_{n}(u-\frac{1}{m})\|^{2}du\nonumber\\
&&+8M_{1}M_{B}^{2}\int_{0}^{t}\mathbb{E}\|X_{n}(u-\frac{1}{m})-X_{n}(u-\frac{1}{n})\|^{2}du\nonumber\\
&\leq&8M_{1}M_{B}^{2}\int_{0}^{t}\mathbb{E}\|X_{m}(u-\frac{1}{m})-X_{n}(u-\frac{1}{m})\|^{2}du\label{ER1}\\
&&+8C_{2}M_{1}M_{B}^{2}\int_{0}^{t}\Big((\frac{1}{m}-\frac{1}{n})^{2}+\Big\|S(\frac{1}{2}(\frac{1}{n}-\frac{1}{m}))\Big\|^{2}+(\frac{1}{n}-\frac{1}{m})\Big)du.\nonumber\\
\mathbb{E}R_{2}&=&4\mathbb{E}\sup_{0\leq s\leq t}\Big\|\int_{0}^{s}S(s-u)[F(u,X_{m}(u-\frac{1}{m}),\mathcal{L}(X_{m}(u-\frac{1}{m})))\nonumber\\
&&~~-F(u,X_{n}(u-\frac{1}{n}),\mathcal{L}(X_{n}(u-\frac{1}{n})))]du\Big\|^{2}\nonumber
\end{eqnarray} 
\begin{eqnarray}
&\leq&8\mathbb{E}\sup_{0\leq s\leq t}\Big\|\int_{0}^{s}S(s-u)[F(u,X_{m}(u-\frac{1}{m}),\mathcal{L}(X_{m}(u-\frac{1}{m})))\nonumber\\
&&~~-F(u,X_{n}(u-\frac{1}{m}),\mathcal{L}(X_{n}(u-\frac{1}{m})))]du\Big\|^{2}\nonumber\\
&&+8\mathbb{E}\sup_{0\leq s\leq t}\Big\|\int_{0}^{s}S(s-u)[F(u,X_{n}(u-\frac{1}{m}),\mathcal{L}(X_{n}(u-\frac{1}{m})))\nonumber\\
&&~~-F(u,X_{n}(u-\frac{1}{n}),\mathcal{L}(X_{n}(u-\frac{1}{n})))]du\Big\|^{2}\nonumber\\
&\leq&8t\mathbb{E}\sup_{0\leq s\leq t}\int_{0}^{s}\|S(s-u)\|^{2}\|F(u,X_{m}(u-\frac{1}{m}),\mathcal{L}(X_{m}(u-\frac{1}{m})))\nonumber\\
&&~~-F(u,X_{n}(u-\frac{1}{m}),\mathcal{L}(X_{n}(u-\frac{1}{m})))\|^{2}du\nonumber\\
&&+8t\mathbb{E}\sup_{0\leq s\leq t}\int_{0}^{s}\|S(s-u)\|^{2}\|F(u,X_{n}(u-\frac{1}{m}),\mathcal{L}(X_{n}(u-\frac{1}{m})))\nonumber\\
&&~~-F(u,X_{n}(u-\frac{1}{n}),\mathcal{L}(X_{n}(u-\frac{1}{n})))\|^{2}du\nonumber\\
&\leq&8tM_{1}\int_{0}^{t}K(u)\Psi(2\mathbb{E}\|X_{m}(u-\frac{1}{m})-X_{n}(u-\frac{1}{m})\|^{2})du\label{ER2}\\
&&+8tM_{1}\int_{0}^{t}K(u)\Psi(2C_{2}(\frac{1}{n}-\frac{1}{m})^{2}+\|S(\frac{1}{2}(\frac{1}{n}-\frac{1}{m}))\|^{2}+(\frac{1}{n}-\frac{1}{m}))du.\nonumber
\end{eqnarray}
Similar to (\ref{EJ7}), by the ${\rm Burkh\ddot{o}lder}$-Davis-Gundy inequality, ($\mathbf{H}_{1}$) and ($\mathbf{H}_{3}$),
\begin{eqnarray}\label{ER16}
\mathbb{E}R_{3}&\leq&8\mathbb{E}\sup_{0\leq s\leq t}\Big\|\int_{0}^{s}S(s-u)[G(u,X_{m}(u-\frac{1}{m}),\mathcal{L}(X_{m}(u-\frac{1}{m})))\nonumber\\
&&~~-G(u,X_{n}(u-\frac{1}{m}),\mathcal{L}(X_{n}(u-\frac{1}{m})))]dW(u)\Big\|^{2}\nonumber\\
&&+8\mathbb{E}\sup_{0\leq s\leq t}\Big\|\int_{0}^{s}S(s-u)[G(u,X_{n}(u-\frac{1}{m}),\mathcal{L}(X_{n}(u-\frac{1}{m})))\nonumber\\
&&~~-G(u,X_{n}(u-\frac{1}{n}),\mathcal{L}(X_{n}(u-\frac{1}{n})))]dW(u)\Big\|^{2}\nonumber\\
&\leq&8M_{1}\int_{0}^{t}K(u)\Psi(2\mathbb{E}\|X_{m}(u-\frac{1}{m})-X_{n}(u-\frac{1}{m})\|^{2})du\\
&&+8M_{1}\int_{0}^{t}K(u)\Psi(2C_{2}(\frac{1}{n}-\frac{1}{m})^{2}+\|S(\frac{1}{2}(\frac{1}{n}-\frac{1}{m}))\|^{2}+(\frac{1}{n}-\frac{1}{m}))du.\nonumber
\end{eqnarray}
Lastly, by Lemma \ref{KN}, ($\mathbf{H}_{1}$) and ($\mathbf{H}_{3}$), we have
\begin{eqnarray}
\mathbb{E}R_{4}&\leq&8\mathbb{E}\sup_{0\leq s\leq t}\Big\|\int_{0}^{s}S(s-u)[J(u-,X_{m}((u-\frac{1}{m})-),\mathcal{L}(X_{m}((u-\frac{1}{m})-)),z)\nonumber\\
&&~~-J(u-,X_{n}((u-\frac{1}{m})-),\mathcal{L}(X_{n}((u-\frac{1}{m})-)),z)]\tilde{N}(du,dz)\Big\|^{2}\nonumber\\
&&+8\mathbb{E}\sup_{0\leq s\leq t}\Big\|\int_{0}^{s}S(s-u)[J(u-,X_{n}((u-\frac{1}{m})-),\mathcal{L}(X_{n}((u-\frac{1}{m})-)),z)\nonumber
\end{eqnarray}
\begin{eqnarray}
&&~~-J(u-,X_{n}((u-\frac{1}{n})-),\mathcal{L}(X_{n}((u-\frac{1}{n})-)),z)]\tilde{N}(du,dz)\Big\|^{2}\nonumber\\
&\leq&8CM_{1}\int_{0}^{t}\int_{Z}\mathbb{E}\|J(u-,X_{m}((u-\frac{1}{m})-),\mathcal{L}(X_{m}((u-\frac{1}{m})-)),z)\nonumber\\
&&~~-J(u-,X_{n}((u-\frac{1}{m})-),\mathcal{L}(X_{n}((u-\frac{1}{m})-)),z)\|^{2}\nu(dz)du\nonumber
\end{eqnarray} 
\begin{eqnarray}\label{ER17}
&&+8CM_{1}\int_{0}^{t}\int_{Z}\mathbb{E}\|J(u-,X_{n}((u-\frac{1}{m})-),\mathcal{L}((X_{n}(u-\frac{1}{m})-)),z)\nonumber\\
&&~~-J(u-,X_{n}((u-\frac{1}{n})-),\mathcal{L}(X_{n}((u-\frac{1}{n})-)),z)\|^{2}\nu(dz)du\nonumber\\
&\leq&8CM_{1}\int_{0}^{t}K(u)\Psi(2\mathbb{E}\|X_{m}(u-\frac{1}{m})-X_{n}(u-\frac{1}{m})\|^{2})du\label{ER4}\\
&&+8CM_{1}\int_{0}^{t}K(u)\Psi(2C_{2}(\frac{1}{n}-\frac{1}{m})^{2}+\|S(\frac{1}{2}(\frac{1}{n}-\frac{1}{m}))\|^{2}+(\frac{1}{n}-\frac{1}{m}))du.\nonumber
\end{eqnarray} 
Substituting (\ref{ER1})-(\ref{ER4}) into (\ref{ER}), we have
\begin{eqnarray*}
&&\mathbb{E}\sup_{0\leq u\leq t}\|X_{m}(u)-X_{n}(u)\|^{2}\leq8M_{1}M_{B}^{2}\int_{0}^{t}\mathbb{E}\sup_{0\leq v\leq u}\|X_{m}(v)-X_{n}(v)\|^{2}du\\
&&~~~~+(16TM_{1}+8CM_{1})K(T)\int_{0}^{t}\Psi(2\mathbb{E}\|X_{m}(u)-X_{n}(u)\|^{2})du\\
&&~~~~+8C_{2}M_{1}M_{B}^{2}T((\frac{1}{n}-\frac{1}{m})^{2}+\|S(\frac{1}{2}(\frac{1}{n}-\frac{1}{m}))\|^{2}+(\frac{1}{n}-\frac{1}{m}))\\
&&~~~~+16TM_{1}^{2}K(T)\Psi(2C_{2}(\frac{1}{n}-\frac{1}{m})^{2}+\|S(\frac{1}{2}(\frac{1}{n}-\frac{1}{m}))\|^{2}+(\frac{1}{n}-\frac{1}{m}))\\
&&~~~~+8CM_{1}TK(T)\Psi(2C_{2}(\frac{1}{n}-\frac{1}{m})^{2}+\|S(\frac{1}{2}(\frac{1}{n}-\frac{1}{m}))\|^{2}+(\frac{1}{n}-\frac{1}{m})).
\end{eqnarray*}
Let $Z(t)=\lim_{m,n\to\infty}\mathbb{E}\sup_{0\leq s\leq t}\|X_{m}(s)-X_{n}(s)\|^{2}$ and $\bar{\Psi}(u)=\Psi(u)+u$, then $\bar{\Psi}: [0,\infty)\to[0,\infty)$ is continuous, nondecreasing and concave function and $\bar{\Psi}(0)=0, \bar{\Psi}(x)>0$ for any $x>0$, furthermore, $\int_{0^{+}}\frac{1}{\bar{\Psi}(x)}dx=+\infty$. Then
\begin{eqnarray*}
&&\mathbb{E}\sup_{0\leq u\leq t}\|X_{m}(u)-X_{n}(u)\|^{2}\\
&&\leq (4M_{1}M_{B}^{2}+16TM_{1}+8CM_{1}K(T))\int_{0}^{t}\bar{\Psi}(2\mathbb{E}\sup_{0\leq v\leq u}\|X_{m}(v)-X_{n}(v)\|^{2})du\\
&&+8C_{2}M_{1}M_{B}^{2}T((\frac{1}{n}-\frac{1}{m})^{2}+\|S(\frac{1}{2}(\frac{1}{n}-\frac{1}{m}))\|^{2}+(\frac{1}{n}-\frac{1}{m}))\\
&&+16TM_{1}^{2}K(T)\Psi(2C_{2}(\frac{1}{n}-\frac{1}{m})^{2}+\|S(\frac{1}{2}(\frac{1}{n}-\frac{1}{m}))\|^{2}+(\frac{1}{n}-\frac{1}{m}))\\
&&+8CM_{1}TK(T)\Psi(2C_{2}(\frac{1}{n}-\frac{1}{m})^{2}+\|S(\frac{1}{2}(\frac{1}{n}-\frac{1}{m}))\|^{2}+(\frac{1}{n}-\frac{1}{m})).\end{eqnarray*}
Let $m,n\to\infty$, and by $\Psi(0)=0$, for any $\epsilon>0$, we have
\begin{eqnarray*}
Z(t)\leq \epsilon+(4M_{1}M_{B}^{2}+16TM_{1}+8CM_{1}K(T))\int_{0}^{t}\bar{\Psi}(2Z(s))ds.
\end{eqnarray*}
Bihari's inequality yields
\begin{eqnarray*}
Z(t)\leq \frac{1}{2}G^{-1}[G(2\epsilon)+2(4M_{1}M_{B}^{2}+16TM_{1}+8CM_{1}K(T))t],
\end{eqnarray*}
where $G(2\epsilon)+2(4M_{1}M_{B}^{2}+16TM_{1}+8CM_{1}K(T))t\in Dom(G^{-1})$ and $G^{-1}$ is the inverse of $G$ with $G(v)=\int_{1}^{v}\frac{ds}{\bar{\Psi}(s)}$, $v>0$. By the assumption $(\mathbf{H}_{3})$, we have $\lim_{\epsilon\to0}G(\epsilon)=-\infty$ and $Dom(G^{-1})=(-\infty, G(\infty))$. Let $\epsilon\to0$, we obtain $Z(t)=0$, that is
\begin{eqnarray*}
\mathbb{E}\sup_{0\leq t\leq T}\|X_{n}(s)-X_{m}(s)\|^{2}=0,\quad m, n\to\infty.
\end{eqnarray*} 
Hence, $(X_{m})_{m\geq 1}$ is a Cauchy sequence and let $m\to\infty$, we have
\begin{eqnarray*}
\lim_{n\to\infty}\mathbb{E}\sup_{0\leq t\leq T}\|X(t)-X_{n}(t)\|^{2}=0.
\end{eqnarray*}
Now, we prove that $X(t)$ is the solution of equation (\ref{main}). In fact, for any $0\leq t\leq T$,we have
\begin{eqnarray*}
\mathbb{E}\|X(t)-X_{k}(t-\frac{1}{k})\|^{2}\leq 2\mathbb{E}\|X(t)-X_{k}(t)\|^{2}+2\mathbb{E}\|X_{k}(t)-X_{k}(t-\frac{1}{k})\|^{2},
\end{eqnarray*}
then
\begin{eqnarray*}
\mathbb{E}\|X(t)-X_{k}(t-\frac{1}{k})\|^{2}\to0,\quad k\to\infty.
\end{eqnarray*}
Therefore, let $k\to\infty$ on both sides of (\ref{Ca}), we have
\begin{eqnarray*}
X(t)&=&S(t)X_{1}+(C(t)-S(t)B)X_{0}+\int_{0}^{t}C(t-s)BX(s)ds\\
&&+\int_{0}^{t}S(t-s)F(s,X(s),\mathcal{L}(X(s)))ds+\int_{0}^{t}S(t-s)G(s,X(s),\mathcal{L}(X(s)))dW(s)\\
&&+\int_{0}^{t}\int_{Z}S(t-s)J(s-,X(s-),\mathcal{L}(X(s-)),z)\tilde{N}(ds,dz),
\end{eqnarray*}
that is, $X(t)$ is the mild solution of the equation (\ref{main}).

\textbf{Step 2.} (Uniqueness) Assume that $X(t)$ and $Y(t)$ are solutions for equation (\ref{main}) on the same probability space with $X(0)=Y(0), X'(0)=Y'(0)$. Then by the H${\rm \ddot{o}}$lder inequality, the BurkH${\rm\ddot{o}}$lder-Davies-Gundy inequality and $(\mathbf{H}_{1})$-$(\mathbf{H}_{3})$,
\begin{eqnarray*}
&&\mathbb{E}\sup_{0\leq s\leq t}\|X(s)-Y(s)\|^{2}\\
&\leq& 4\mathbb{E}\sup_{0\leq s\leq t}\Big\|\int_{0}^{s}C(s-u)B[X(u)-Y(u)]du\Big\|^{2}\\
&&+4\mathbb{E}\sup_{0\leq s\leq t}\Big\|\int_{0}^{s}S(s-u)[F(u,X(u),\mathcal{L}(X(u)))-F(u,Y(u),\mathcal{L}(Y(u)))]ds\Big\|^{2}\\
&&+4\mathbb{E}\sup_{0\leq s\leq t}\Big\|\int_{0}^{s}S(s-u)[G(u,X(u),\mathcal{L}(X(u)))-G(u,Y(u),\mathcal{L}(Y(u)))]dW(u)\Big\|^{2}
\end{eqnarray*}
\begin{eqnarray*}
&&+4\mathbb{E}\sup_{0\leq s\leq t}\Big\|\int_{0}^{s}\int_{Z}S(s-u)[J(u-,X(u-),\mathcal{L}(X(u-)),z)\\
&&~~-J(u-,Y(u-),\mathcal{L}(Y(u-)),z)]\tilde{N}(ds,dz)\Big\|^{2}\\
&\leq& 4M_{1}M_{B}^{2}T\int_{0}^{t}\mathbb{E}\|X(u)-Y(u)\|^{2}du+4tM_{1}\int_{0}^{t}K(u)\Psi(2\mathbb{E}\sup_{0\leq v\leq u}\|X(v)-Y(v)\|^{2})du\\
&&+16M_{1}\int_{0}^{t}K(u)\Psi(2\mathbb{E}\sup_{0\leq v\leq u}\|X(v)-Y(v)\|^{2})du\\
&&+4CM_{1}\int_{0}^{t}K(u)\Psi(2\mathbb{E}\sup_{0\leq v\leq u}\|X(v)-Y(v)\|^{2})du,
\end{eqnarray*}
then
\begin{eqnarray*}
\mathbb{E}\sup_{0\leq s\leq t}\|X(s)-Y(s)\|^{2}&\leq&(2M_{1}M_{B}^{2}T+4M_{1}K(T)T+16M_{1}K(T)+4CM_{1}K(T))\\
&&\cdot\int_{0}^{t}\bar{\Psi}(2\mathbb{E}\sup_{0\leq v\leq u}\|X(v)-Y(v)\|^{2})du.
\end{eqnarray*}
By the Bihari inequality, we have $X(t)=Y(t), t\in [0,T], \mathbb{P}$-a.s..
\end{proof}

\section{Averaging principle}\label{Aver}
  \setcounter{equation}{0}
  \renewcommand{\theequation}
{4.\arabic{equation}}
In the section, we are devoted to establish an averaging principle for (\ref{main}). The standard integral formulation of equation (\ref{main}) is 
\begin{eqnarray}\label{SX}
X^{\epsilon}(t)&=&S(t)X_{1}+[C(t)-S(t)B]X_{0}+\int_{0}^{t}C(t-s)BX^{\epsilon}(s)ds\nonumber\\
&&+\epsilon\int_{0}^{t}S(t-s)F(s,X^{\epsilon}(s),\mathcal{L}(X^{\epsilon}(s)))ds\nonumber\\
&&+\sqrt{\epsilon}\int_{0}^{t}S(t-s)G(s,X^{\epsilon}(s),\mathcal{L}(X^{\epsilon}(s)))dW(s)\\
&&+\sqrt{\epsilon}\int_{0}^{t}\int_{Z}S(t-s)J(s-,X^{\epsilon}(s-),\mathcal{L}(X^{\epsilon}(s-)),z)\tilde{N}(ds,dz),\quad a.s.\nonumber
\end{eqnarray}
where $0\leq t\leq T$ and $\mathcal{L}(X(t))$ is the distribution of $X(t)$. $\epsilon\in(0,\epsilon_{1}](0<\epsilon_{1}\ll1)$ is a small parameter, the coefficients satisfy the assumptions of Theorem \ref{mainthm}.
Next we show that as $\epsilon\to0$, the standard mild solution $X^{\epsilon}(t)$ may be approximated by the mild solution $Z^{\epsilon}(t)$ of the averaged equation
\begin{eqnarray}
Z^{\epsilon}(t)&=&S(t)X_{1}+[C(t)-S(t)B]X_{0}+\int_{0}^{t}C(t-s)BZ^{\epsilon}(s)ds\nonumber
\end{eqnarray}
\begin{eqnarray}\label{SZ}
&&+\epsilon\int_{0}^{t}S(t-s)\bar{F}(Z^{\epsilon}(s),\mathcal{L}(Z^{\epsilon}(s)))ds\nonumber\\
&&+\sqrt{\epsilon}\int_{0}^{t}S(t-s)\bar{G}(Z^{\epsilon}(s),\mathcal{L}(Z^{\epsilon}(s)))dW(s)\\
&&+\sqrt{\epsilon}\int_{0}^{t}\int_{Z}S(t-s)\bar{J}(Z^{\epsilon}(s-),\mathcal{L}(Z^{\epsilon}(s-)),z)\tilde{N}(ds,dz),\quad a.s.\nonumber
\end{eqnarray}
the coefficients $\bar{F}(\cdot), \bar{G}(\cdot), \bar{J}(\cdot,z)\in H$ are measurable functions satisfying assumptions ($\mathbf{H}_{1}$)-($\mathbf{H}_{3}$), then the Theorem \ref{mainthm} is valid for $Z^{\epsilon}(t)$. Furthermore, the following assumptions hold

($\mathbf{H}_{4}$) For any $T_{1}\in[0,T]$,\\
(i)~\begin{eqnarray*}
\frac{1}{T_{1}}\int_{0}^{T_{1}}\|F(t,\xi,\mu)-\bar{F}(\xi, \mu)\|^{2}dt\leq \phi_{1}(T_{1})\psi(\|\xi\|^{2}+\mathcal{W}_{2}(\mu,\delta_{0})^{2}),
\end{eqnarray*}
(ii)
\begin{eqnarray*}
\frac{1}{T_{1}}\int_{0}^{T_{1}}\|G(t,\xi,\mu)-\bar{G}(\xi, \mu)\|_{L_{2}^{0}}^{2}dt\leq \phi_{2}(T_{1})\psi(\|\xi\|^{2}+\mathcal{W}_{2}(\mu,\delta_{0})^{2}),
\end{eqnarray*}
(iii)
\begin{eqnarray*}
\frac{1}{T_{1}}\int_{0}^{T_{1}}\int_{Z}\|J(t,\xi,\mu,z)-\bar{J}(\xi,\mu,z)\|^{2}\nu(dz)dt\leq \phi_{3}(T_{1})\psi(\|\xi\|^{2}+\mathcal{W}_{2}(\mu,\delta_{0})^{2}),
\end{eqnarray*}
where $\phi_{i}(\cdot)$ are bounded positive functions with $\lim_{T_{1}\to\infty}\phi_{i}(T_{1})=0, i=1,2,3$ and $\psi(\cdot)$ is a continuous, nondecreasing concave function. Moreover, $\psi(0)=0, \psi(x)>0$ for $x>0$, $\int_{0^{+}}\frac{dx}{\psi(x)}=\infty$.
\begin{theorem}
Suppose that {\rm($\mathbf{H}_{1}$)}-{\rm($\mathbf{H}_{4}$)} hold, For any fixed small parameter $\delta_{1}>0$, there exists $L>0, \alpha\in(0,1)$, $\epsilon_{0}>0$ and $\epsilon_{1}\in(0,\epsilon_{0}]$ such that for any $\epsilon\in(0,\epsilon_{1}]$,
\begin{eqnarray*}
\mathbb{E}\sup_{t\in[0,L\epsilon^{-\alpha}]}\|X^{\epsilon}(s)-Z^{\epsilon}(s)\|^{2}&\leq&\delta_{1}.
\end{eqnarray*}
\end{theorem}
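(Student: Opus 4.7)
My plan is to subtract (\ref{SZ}) from (\ref{SX}) and split each of the three coefficient integrands via
\begin{eqnarray*}
F(s,X^{\epsilon},\mathcal{L}(X^{\epsilon}))-\bar F(Z^{\epsilon},\mathcal{L}(Z^{\epsilon}))&=&[F(s,X^{\epsilon},\mathcal{L}(X^{\epsilon}))-F(s,Z^{\epsilon},\mathcal{L}(Z^{\epsilon}))]\\
&&+[F(s,Z^{\epsilon},\mathcal{L}(Z^{\epsilon}))-\bar F(Z^{\epsilon},\mathcal{L}(Z^{\epsilon}))],
\end{eqnarray*}
and analogously for $G$ and $J$. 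This decomposes $X^{\epsilon}(t)-Z^{\epsilon}(t)$ into three \emph{Lipschitz remainders} to be handled by $(\mathbf{H}_{3})$, three \emph{averaging remainders} to be handled by $(\mathbf{H}_{4})$, and the common $B$-integral $\int_{0}^{t}C(t-s)B[X^{\epsilon}(s)-Z^{\epsilon}(s)]\,ds$.

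The Lipschitz remainders I would estimate exactly as in the uniqueness step of Theorem \ref{mainthm}: H\"older for the drift, Burkholder-Davis-Gundy for the Wiener integral, Kunita's inequality (Lemma \ref{KN}) for the Poisson integral. Because the $\epsilon$, $\sqrt{\epsilon}$ prefactors in (\ref{SX}) and (\ref{SZ}) become $\epsilon^{2}$ and $\epsilon$ after squaring, these terms together with the $B$-integral yield a bound of the form
\begin{eqnarray*}
C(\epsilon^{2}t+\epsilon)M_{1}\int_{0}^{t}K(s)\Psi\Big(2\mathbb{E}\sup_{u\le s}\|X^{\epsilon}-Z^{\epsilon}\|^{2}\Big)ds+M_{1}M_{B}^{2}\,t\int_{0}^{t}\mathbb{E}\sup_{u\le r}\|X^{\epsilon}-Z^{\epsilon}\|^{2}\,dr.
\end{eqnarray*}
For the averaging remainders the same three inequalities followed by $(\mathbf{H}_{4})$, together with a uniform a priori estimate $\mathbb{E}\sup_{s\le L\epsilon^{-\alpha}}\|Z^{\epsilon}(s)\|^{2}\le \tilde C$ obtained by replaying Lemma \ref{SUB} on (\ref{SZ}), produce a single free term
\begin{eqnarray*}
\eta(\epsilon,t)=Ct\bigl(\epsilon^{2}t\,\phi_{1}(t)+\epsilon\,\phi_{2}(t)+\epsilon\,\phi_{3}(t)\bigr)\psi(\tilde C),
\end{eqnarray*}
which at $t=L\epsilon^{-\alpha}$ vanishes as $\epsilon\to 0$ whenever $\alpha<1$, by virtue of $\phi_{i}(L\epsilon^{-\alpha})\to 0$ and the polynomial factors $\epsilon^{2-2\alpha},\epsilon^{1-\alpha}$.

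Setting $f(t)=\mathbb{E}\sup_{s\le t}\|X^{\epsilon}(s)-Z^{\epsilon}(s)\|^{2}$ and $\bar\Psi(u)=\Psi(u)+u$, the combined inequality reads $f(t)\le \eta(\epsilon,t)+\kappa t\int_{0}^{t}\bar\Psi(f(s))\,ds,$ to which Bihari's inequality applies, yielding
\begin{eqnarray*}
f(L\epsilon^{-\alpha})\le G^{-1}\bigl(G(\eta(\epsilon))+\kappa L^{2}\epsilon^{-2\alpha}\bigr),\quad G(v)=\int_{1}^{v}\frac{du}{\bar\Psi(u)}.
\end{eqnarray*}
I would then pick $\alpha\in(0,1)$ first, then $L$, then $\epsilon_{1}\in(0,\epsilon_{0}]$, using $\lim_{v\to 0^{+}}G(v)=-\infty$ (from $\int_{0^{+}}du/\bar\Psi(u)=\infty$) to guarantee that the argument of $G^{-1}$ can be driven below any prescribed level and hence $f(L\epsilon^{-\alpha})\le\delta_{1}$.

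The main obstacle in this plan is the $B$-integral: it carries no $\epsilon$, and through Bihari it produces a factor $G^{-1}(\,\cdot\,+\kappa L^{2}\epsilon^{-2\alpha})$ whose second argument \emph{diverges} as $\epsilon\to 0$; the smallness of $\eta(\epsilon)$ must therefore push $G(\eta(\epsilon))$ to $-\infty$ fast enough to overwhelm $\kappa L^{2}\epsilon^{-2\alpha}$, which is what forces $\alpha$ strictly below $1$ and a delicate joint choice of $(L,\alpha,\epsilon_{1})$ dictated by the profile of $\bar\Psi$ near the origin. A secondary subtlety is that $(\mathbf{H}_{4})$ is phrased for fixed $(\xi,\mu)$ whereas the averaging arguments contain $\xi=Z^{\epsilon}(s)$; I would resolve this by a standard time-partition/freezing scheme, replacing $Z^{\epsilon}(s)$ by $Z^{\epsilon}(k\Delta)$ on subintervals of length $\Delta$ and controlling the freezing error by the increment estimate for $Z^{\epsilon}$ (the analog of the second lemma of Section \ref{EU}).
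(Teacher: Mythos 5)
Your decomposition into Lipschitz remainders (handled by $(\mathbf{H}_{3})$) and averaging remainders (handled by $(\mathbf{H}_{4})$) is exactly the paper's, and your accounting of the $\epsilon$, $\sqrt{\epsilon}$ prefactors and of the free term $\eta(\epsilon,t)$ matches (\ref{Aver5})--(\ref{Aver7}). The genuine gap sits precisely at the place you flag as ``the main obstacle'': your hope that the smallness of $\eta(\epsilon)$ pushes $G(\eta(\epsilon))$ to $-\infty$ fast enough to overwhelm the Bihari correction $\kappa L^{2}\epsilon^{-2\alpha}$ is unrealizable. Since $\bar\Psi(u)=\Psi(u)+u\geq u$, one has $G(v)=\int_{1}^{v}du/\bar\Psi(u)\geq \ln v$ for $v\in(0,1)$, so $G(\eta(\epsilon))=O(|\ln\epsilon|)$ diverges only logarithmically while $\kappa L^{2}\epsilon^{-2\alpha}$ grows polynomially; the argument of $G^{-1}$ therefore tends to $+\infty$ and the bound collapses. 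No choice of $(L,\alpha,\epsilon_{1})$ ``dictated by the profile of $\bar\Psi$'' can repair this as long as the $B$-integral, which carries no $\epsilon$, sits inside the Bihari kernel.

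The paper avoids the obstacle by two moves you do not make. First, the $B$-integral is never allowed into the Gronwall/Bihari kernel: using Lemma \ref{ME} with $r=\sqrt{M_{1}}M_{B}T/(1-\sqrt{M_{1}}M_{B}T)$ it is absorbed into the left-hand side, giving $\mathbb{E}\sup_{s\leq t}\|X^{\epsilon}(s)-Z^{\epsilon}(s)\|^{2}\leq (1-\sqrt{M_{1}}M_{B}T)^{-2}\,\mathbb{E}\sup_{s\leq t}\|\Lambda(s)\|^{2}$ as in (\ref{AverXZ}) (this tacitly requires $\sqrt{M_{1}}M_{B}T<1$, a restriction worth making explicit). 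After that absorption every remaining kernel carries a factor $\epsilon$ or $\epsilon^{2}$. Second, the paper does not invoke Bihari here at all: it linearizes $\Psi(u)\leq\beta(1+u)$ via (\ref{ES}) and applies plain Gronwall, so the exponent $C_{3}t$ is $O(\epsilon)\cdot t$ and stays bounded on $t\leq L\epsilon^{-\alpha}$; moreover the time interval is constrained by $[0,L\epsilon^{-\alpha}]\subset[0,T]$ with $T$ fixed, so all constants are uniform. Your secondary point about $(\mathbf{H}_{4})$ being stated for fixed $(\xi,\mu)$ while the proof needs $\xi=Z^{\epsilon}(u)$ is well taken --- the paper simply substitutes under the time integral without any freezing/partition argument --- but that observation critiques the paper's rigor rather than closing the hole in your own plan.
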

\begin{proof}
By (\ref{SX}), (\ref{SZ}), Lemma \ref{ME}, {\rm($\mathbf{H}_{1}$)} and {\rm($\mathbf{H}_{2}$)},
\begin{eqnarray*}
\|X^{\epsilon}(t)-Z^{\epsilon}(t)\|^{2}&\leq& (1+r)\Big[\|\Lambda(t)\|^{2}+\frac{\|\int_{0}^{t}C(t-s)B[X^{\epsilon}(s)-Z^{\epsilon}(s)]ds\|^{2}}{r}\Big]\\
&\leq&(1+r)\Big[\|\Lambda(t)\|^{2}+\frac{M_{1}M_{B}^{2}T\int_{0}^{t}\|X^{\epsilon}(s)-Z^{\epsilon}(s)\|^{2}ds}{r}\Big],
\end{eqnarray*}
where
\begin{eqnarray*}
\Lambda(t)&=&\epsilon\int_{0}^{t}S(t-s)[F(s,X^{\epsilon}(s),\mathcal{L}(X^{\epsilon}(s)))-\bar{F}(Z^{\epsilon}(s),\mathcal{L}(Z^{\epsilon}(s)))]ds\\
&+&\sqrt{\epsilon}\int_{0}^{t}S(t-s)[G(s,X^{\epsilon}(s),\mathcal{L}(X^{\epsilon}(s)))-\bar{G}(Z^{\epsilon}(s),\mathcal{L}(Z^{\epsilon}(s)))]dW(s)\\
&+&\sqrt{\epsilon}\int_{0}^{t}\int_{Z}S(t-s)[J(s-,X^{\epsilon}(s-),\mathcal{L}(X^{\epsilon}(s-)),z)-\bar{J}(Z^{\epsilon}(s-),\mathcal{L}(Z^{\epsilon}(s-)),z)]\tilde{N}(ds,dz).
\end{eqnarray*}
Taking $r=\frac{\sqrt{M_{1}}M_{B}T}{1-\sqrt{M_{1}}M_{B}T}$, then
\begin{eqnarray*}
&&\mathbb{E}\sup_{0\leq s\leq t}\|X^{\epsilon}(s)-Z^{\epsilon}(s)\|^{2}\\
&\leq&(1+r)\Big[\mathbb{E}\sup_{0\leq s\leq t}\|\Lambda(s)\|^{2}+\frac{M_{1}M_{B}^{2}T^{2}\mathbb{E}\sup_{0\leq s\leq t}\|X^{\epsilon}(s)-Z^{\epsilon}(s)\|^{2}}{r}\Big]\\
&\leq&\frac{1}{1-\sqrt{M_{1}}M_{B}T}\mathbb{E}\sup_{0\leq s\leq t}\|\Lambda(s)\|^{2}+\sqrt{M_{1}}M_{B}T\mathbb{E}\sup_{0\leq s\leq t}\|X^{\epsilon}(s)-Z^{\epsilon}(s)\|^{2}.
\end{eqnarray*}
Further,
\begin{eqnarray}\label{AverXZ}
\mathbb{E}\sup_{0\leq s\leq t}\|X^{\epsilon}(s)-Z^{\epsilon}(s)\|^{2}\leq\frac{1}{(1-\sqrt{M_{1}}M_{B}T)^{2}}\mathbb{E}\sup_{0\leq s\leq t}\|\Lambda(s)\|^{2}.
\end{eqnarray}
By basic inequality,
\begin{eqnarray}\label{Lam}
&&\mathbb{E}\sup_{0\leq s\leq t}\|\Lambda(s)\|^{2}\nonumber\\
&\leq& 3\epsilon^{2}\mathbb{E}\sup_{0\leq s\leq t}\Big\|\int_{0}^{s}S(s-u)[F(u,X^{\epsilon}(u),\mathcal{L}(X^{\epsilon}(u)))-\bar{F}(Z^{\epsilon}(u),\mathcal{L}(Z^{\epsilon}(u-)))]du\Big\|^{2}\nonumber\\
&&+3\epsilon\mathbb{E}\sup_{0\leq s\leq t}\Big\|\int_{0}^{s}S(s-u)[G(u,X^{\epsilon}(u),\mathcal{L}(X^{\epsilon}(u)))-\bar{G}(Z^{\epsilon}(s),\mathcal{L}(Z^{\epsilon}(s)))]dW(u)\Big\|^{2}\nonumber\\
&&+3\epsilon\mathbb{E}\sup_{0\leq s\leq t}\Big\|\int_{0}^{s}\int_{Z}S(s-u)[J(u-,X^{\epsilon}(u-),\mathcal{L}(X^{\epsilon}(u-)),z)\nonumber\\
&&~~-\bar{J}(Z^{\epsilon}(u-),\mathcal{L}(Z^{\epsilon}(u-)),z)]\tilde{N}(du,dz)\Big\|^{2}\triangleq \sum_{i=1}^{3}\Pi_{i}.
\end{eqnarray}
For $\Pi_{1}$, by ($\mathbf{H}_{1}$) and ($\mathbf{H}_{3}$) and ($\mathbf{H}_{4}$)(i), we have
\begin{eqnarray}\label{Aver5}
\Pi_{1}&\leq&3\epsilon^{2}\mathbb{E}\sup_{0\leq s\leq t}\Big\|\int_{0}^{s}S(s-u)[F(u,X^{\epsilon}(u),\mathcal{L}(X^{\epsilon}(u)))-F(u,Z^{\epsilon}(u),\mathcal{L}(Z^{\epsilon}(u)))]du\Big\|^{2}\nonumber\\
&&+3\epsilon^{2}\mathbb{E}\sup_{0\leq s\leq t}\Big\|\int_{0}^{s}S(s-u)[F(u,Z^{\epsilon}(u),\mathcal{L}(Z^{\epsilon}(u)))-\bar{F}(Z^{\epsilon}(u),\mathcal{L}(Z^{\epsilon}(u)))]du\Big\|^{2}\nonumber\\
&\leq&3\epsilon^{2}M_{1}t\mathbb{E}\int_{0}^{t}\Psi(\|X^{\epsilon}(u)-Z^{\epsilon}(u)\|^{2}+\mathcal{W}_{2}(\mathcal{L}(X^{\epsilon}(u)),\mathcal{L}(Z^{\epsilon}(u)))^{2})du\nonumber\\
&&+3\epsilon^{2}M_{1}t^{2}\mathbb{E}\frac{1}{t}\int_{0}^{t}\|F(u,Z^{\epsilon}(u),\mathcal{L}(Z^{\epsilon}(u)))-\bar{F}(Z^{\epsilon}(u),\mathcal{L}(Z^{\epsilon}(u)))\|^{2}du\nonumber\\
&\leq&3\epsilon^{2}M_{1}t\int_{0}^{t}\Psi(2\mathbb{E}\sup_{0\leq v\leq u}\|X^{\epsilon}(v)-Z^{\epsilon}(v)\|^{2})du\nonumber\\
&&+3\epsilon^{2}M_{1}t^{2}\phi_{1}(T)\psi(2\mathbb{E}\sup_{0\leq s\leq t}\|Z^{\epsilon}(s)\|^{2}).
\end{eqnarray}
For $\Pi_{2}$ and $\Pi_{4}$, by a similar proof as (\ref{ER16}) and (\ref{ER17}), together with ($\mathbf{H}_{4}$)(ii), (iii), we have
\begin{eqnarray}\label{Aver6}
\Pi_{2}&\leq&3\epsilon\mathbb{E}\sup_{0\leq s\leq t}\Big\|\int_{0}^{s}S(s-u)[G(u,X^{\epsilon}(u),\mathcal{L}(X^{\epsilon}(u)))-G(u,Z^{\epsilon}(u),\mathcal{L}(Z^{\epsilon}(u)))]dW(u)\Big\|^{2}\nonumber\\
&&+3\epsilon\mathbb{E}\sup_{0\leq s\leq t}\Big\|\int_{0}^{s}S(s-u)[G(u,Z^{\epsilon}(u),\mathcal{L}(Z^{\epsilon}(u)))-\bar{G}(Z^{\epsilon}(u),\mathcal{L}(Z^{\epsilon}(u)))]dW(u)\Big\|^{2}\nonumber\\
&\leq&3\epsilon M_{1}\int_{0}^{t}\mathbb{E}\|G(u,X^{\epsilon}(u),\mathcal{L}(X^{\epsilon}(u)))-G(u,Z^{\epsilon}(u),\mathcal{L}(Z^{\epsilon}(u)))\|_{L_{2}^{0}}^{2}du\nonumber\\
&&+3\epsilon M_{1}\mathbb{E}\int_{0}^{t}\|G(u,Z^{\epsilon}(u),\mathcal{L}(Z^{\epsilon}(u)))-\bar{G}(Z^{\epsilon}(u),\mathcal{L}(Z^{\epsilon}(u)))\|_{L_{2}^{0}}^{2}du\nonumber\\
&\leq&3\epsilon M_{1}\mathbb{E}\int_{0}^{t}\Psi(\|X^{\epsilon}(u)-Z^{\epsilon}(u)\|^{2}+\mathcal{W}_{2}(\mathcal{L}(X^{\epsilon}(u)),\mathcal{L}(Z^{\epsilon}(u)))^{2})du\nonumber\\
&&+3\epsilon M_{1}t\mathbb{E}\frac{1}{t}\int_{0}^{t}\|G(u,Z^{\epsilon}(u),\mathcal{L}(Z^{\epsilon}(u)))-\bar{G}(Z^{\epsilon}(u),\mathcal{L}(Z^{\epsilon}(u)))\|_{L_{2}^{0}}^{2}du\nonumber\\
&\leq&3\epsilon M_{1}\int_{0}^{t}\Psi(2\mathbb{E}\sup_{0\leq v\leq u}\|X^{\epsilon}(v)-Z^{\epsilon}(v)\|^{2})du\nonumber\\
&&+3\epsilon M_{1}t\phi_{2}(T)\psi(2\mathbb{E}\sup_{0\leq s\leq t}\|Z^{\epsilon}(s)\|^{2}),
\end{eqnarray}
and
\begin{eqnarray}\label{Aver7}
\Pi_{3}&\leq&3\epsilon\mathbb{E}\sup_{0\leq s\leq t}\Big\|\int_{0}^{s}\int_{Z}S(s-u)[J(u-,X^{\epsilon}(u-),\mathcal{L}(X^{\epsilon}(u-)),z)\nonumber\\
&&~~-J(u-,Z^{\epsilon}(u-),\mathcal{L}(Z^{\epsilon}(u-)),z)]\tilde{N}(du,dz)\Big\|^{2}\nonumber\\
&&+3\epsilon\mathbb{E}\sup_{0\leq s\leq t}\Big\|\int_{0}^{s}\int_{Z}S(s-u)[J(u-,Z^{\epsilon}(u-),\mathcal{L}(Z^{\epsilon}(u-)),z)\nonumber\\
&&~~-\bar{J}(Z^{\epsilon}(u-),\mathcal{L}(Z^{\epsilon}(u-)),z)]\tilde{N}(du,dz)\Big\|^{2}\nonumber\\
&\leq&3\epsilon M_{1}\mathbb{E}\int_{0}^{t}\int_{Z}\|J(u-,X^{\epsilon}(u-),\mathcal{L}(X^{\epsilon}(u-)),z)\nonumber\\
&&-J(u-,Z^{\epsilon}(u-),\mathcal{L}(Z^{\epsilon}(u-)),z)\|^{2}\nu(dz)du+3\epsilon M_{1}\mathbb{E}\int_{0}^{t}\int_{Z}\|J(u-,Z^{\epsilon}(u-),\mathcal{L}(Z^{\epsilon}(u-)),z)\nonumber\\
&&-\bar{J}(Z^{\epsilon}(u-),\mathcal{L}(Z^{\epsilon}(u-)),z)\|^{2}\nu(dz)du\nonumber\\
&\leq&3\epsilon M_{1}\int_{0}^{t}\Psi(\|X^{\epsilon}(u)-Z^{\epsilon}(u)\|^{2}+\mathcal{W}_{2}(\mathcal{L}(X^{\epsilon}(u)),\mathcal{L}(Z^{\epsilon}(u)))^{2})du\nonumber\\
&&+3\epsilon M_{1}t\mathbb{E}\frac{1}{t}\int_{0}^{t}\int_{Z}\|J(u-,Z^{\epsilon}(u-),\mathcal{L}(Z^{\epsilon}(u-)),z)-\bar{J}(Z^{\epsilon}(u-),\mathcal{L}(Z^{\epsilon}(u-)),z)\|^{2}\nu(dz)du\nonumber\\
&\leq&3\epsilon M_{1}\int_{0}^{t}\Psi(2\mathbb{E}\sup_{0\leq v\leq u}\|X^{\epsilon}(v)-Z^{\epsilon}(v)\|^{2})du+3\epsilon M_{1}t\phi_{3}(T)\psi(2\mathbb{E}\sup_{0\leq s\leq t}\|Z^{\epsilon}(s)\|^{2}).
\end{eqnarray}
Substituting (\ref{Lam})-(\ref{Aver7}) into (\ref{AverXZ}), and together with (\ref{ES}), we have

\begin{eqnarray*}
&&\mathbb{E}\sup_{0\leq s\leq t}\|X^{\epsilon}(s)-Z^{\epsilon}(s)\|^{2}\\
&\leq&\frac{3\epsilon^{2}M_{1}t}{(1-\sqrt{M_{1}}M_{B}T)^{2}}\int_{0}^{t}\Psi(2\mathbb{E}\sup_{0\leq v\leq u}\|X^{\epsilon}(v)-Z^{\epsilon}(v)\|^{2})du\\
&&+\frac{3\epsilon^{2}M_{1}t^{2}}{(1-\sqrt{M_{1}}M_{B}T)^{2}}\phi_{1}(T)\psi(2\mathbb{E}\sup_{0\leq s\leq t}\|Z^{\epsilon}(s)\|^{2})\\
&&+\frac{3\epsilon M_{1}}{(1-\sqrt{M_{1}}M_{B}T)^{2}}\int_{0}^{t}\Psi(2\mathbb{E}\sup_{0\leq v\leq u}\|X^{\epsilon}(v)-Z^{\epsilon}(v)\|^{2})du\\
&&+\frac{3\epsilon M_{1}t}{(1-\sqrt{M_{1}}M_{B}T)^{2}}\phi_{2}(T)\psi(2\mathbb{E}\sup_{0\leq s\leq t}\|Z^{\epsilon}(s)\|^{2})\\
&&+\frac{3\epsilon M_{1}}{(1-\sqrt{M_{1}}M_{B}T)^{2}}\int_{0}^{t}\Psi(2\mathbb{E}\sup_{0\leq v\leq u}\|X^{\epsilon}(v)-Z^{\epsilon}(v)\|^{2})du\\
&&+\frac{3\epsilon M_{1}t}{(1-\sqrt{M_{1}}M_{B}T)^{2}}\phi_{3}(T)\psi(2\mathbb{E}\sup_{0\leq s\leq t}\|Z^{\epsilon}(s)\|^{2})\\
&\leq&\frac{3\epsilon^{2}M_{1}t}{(1-\sqrt{M_{1}}M_{B}T)^{2}}\int_{0}^{t}[\beta+\beta(2\mathbb{E}\sup_{0\leq v\leq u}\|X^{\epsilon}(v)-Z^{\epsilon}(v)\|^{2})]du\\
&&+3\epsilon^{2}M_{1}t^{2}\phi_{1}(T)\psi(2\mathbb{E}\sup_{0\leq s\leq t}\|Z^{\epsilon}(s)\|^{2})\\
&&+\frac{3\epsilon M_{1}}{(1-\sqrt{M_{1}}M_{B}T)^{2}}\int_{0}^{t}[\beta+\beta(2\mathbb{E}\sup_{0\leq v\leq u}\|X^{\epsilon}(v)-Z^{\epsilon}(v)\|^{2})]du\\
&&+\frac{3\epsilon M_{1}t}{(1-\sqrt{M_{1}}M_{B}T)^{2}}\phi_{2}(T)\psi(2\mathbb{E}\sup_{0\leq s\leq t}\|Z^{\epsilon}(s)\|^{2})\\
&&+\frac{3\epsilon M_{1}}{(1-\sqrt{M_{1}}M_{B}T)^{2}}\int_{0}^{t}[\beta+\beta(2\mathbb{E}\sup_{0\leq v\leq u}\|X^{\epsilon}(v)-Z^{\epsilon}(v)\|^{2})]du\\
&&+\frac{3\epsilon M_{1}t}{(1-\sqrt{M_{1}}M_{B}T)^{2}}\phi_{3}(T)\psi(2\mathbb{E}\sup_{0\leq s\leq t}\|Z^{\epsilon}(s)\|^{2})\\
&\leq&C_{1}\epsilon t+C_{3}\int_{0}^{t}\mathbb{E}\sup_{0\leq v\leq u}\|X^{\epsilon}(v)-Z^{\epsilon}(v)\|^{2}du,
\end{eqnarray*}
where
\begin{eqnarray*}
C_{1}&=&3\epsilon M_{1}t\beta+3\epsilon M_{1}t\phi_{1}(T)\psi(2\mathbb{E}\sup_{0\leq s\leq t}\|Z^{\epsilon}(s)\|^{2})\\
&&+3\epsilon M_{1}\beta+3 M_{1}\phi_{2}(T)\psi(2\mathbb{E}\sup_{0\leq s\leq t}\|Z^{\epsilon}(s)\|^{2})\\
&&+3\epsilon M_{1}\beta+3 M_{1}\phi_{3}(T)\psi(2\mathbb{E}\sup_{0\leq s\leq t}\|Z^{\epsilon}(s)\|^{2})\\
C_{3}&=&6\epsilon^{2}M_{1}ta+6M_{1}\epsilon \beta+6M_{1}C\epsilon \beta.
\end{eqnarray*}
Gronwall's inequality yields
\begin{eqnarray*}
\mathbb{E}\sup_{0\leq s\leq t}\|X^{\epsilon}(s)-Z^{\epsilon}(s)\|^{2}&\leq&C_{1}\epsilon te^{C_{3}t}.
\end{eqnarray*}
Choosing $\alpha\in(0,1)$ and $L>0$ such that for any $t\in[0,L\epsilon^{-\alpha}]\subset[0,T]$, we have
\begin{eqnarray*}
\mathbb{E}\sup_{t\in[0,L\epsilon^{-\alpha}]}\|X^{\epsilon}(s)-Z^{\epsilon}(s)\|^{2}&\leq&C_{4}L\epsilon^{1-\alpha},
\end{eqnarray*}
where $C_{4}=C_{1}e^{C_{3}L\epsilon^{-\alpha}}$.
Hence, for any $\delta_{1}>0$, choosing $\epsilon_{1}\in(0,\epsilon_{0}]$ such that for any $\epsilon\in(0,\epsilon_{1}]$ and $t\in[0,L\epsilon^{-\alpha}]$, we have
\begin{eqnarray*}
\mathbb{E}\sup_{t\in[0,L\epsilon^{-\alpha}]}\|X^{\epsilon}(s)-Z^{\epsilon}(s)\|^{2}&\leq&\delta_{1}.
\end{eqnarray*}
\end{proof}

\end{document}